\newcommand{\idiot}[1]{\vspace{5 mm}\par \noindent
\marginpar{\textsc{Note}}
\framebox{\begin{minipage}[c]{1.05 \textwidth}
#1 \end{minipage}}\vspace{5 mm}\par}
\newcommand{\todone}[1]{\vspace{5 mm}\par \noindent
\marginpar{\textsc{DONE!}}
\framebox{\begin{minipage}[c]{1.05 \textwidth}
\tt #1 \end{minipage}}\vspace{5 mm}\par}
\renewcommand{\todone}[1]{} 
\renewcommand{\idiot}[1]{}
\newdimen\squaresize \squaresize=12pt
\newdimen\thickness \thickness=0.4pt
\def\square#1{\hbox{\vrule width \thickness
    \vbox to \squaresize{\hrule height \thickness\vss
       \hbox to \squaresize{\hss#1\hss}
    \vss\hrule height\thickness}
\unskip\vrule width \thickness}
\kern-\thickness}
\def\vsquare#1{\vbox{\square{$#1$}}\kern-\thickness}
\def\young#1{
\vbox{\smallskip\offinterlineskip
\halign{&\vsquare{##}\cr #1}}}
\def\thisbox#1{\kern-.09ex\mathfrak{b}ox{#1}}
\def\downbox#1{\lower1.200em\hbox{#1}}
\definecolor{grey}{rgb}{0.8,0.8,0.8} 
\newcommand{\grey}{\color{grey}} 
\def\gsqr{\grey \vrule height\squaresize width\squaresize}%
\newtheorem*{rep@theorem}{\rep@title}
\newcommand{\newreptheorem}[2]{%
\newenvironment{rep#1}[1]{%
 \def\rep@title{#2 \ref{##1}}%
 \begin{rep@theorem}}%
 {\end{rep@theorem}}}
\newtheorem{thm}{Theorem}[section]
\newtheorem*{thm*}{Theorem}
\newtheorem{prop}[thm]{Proposition}
\newtheorem*{prop*}{Proposition}
\newtheorem{lemma}[thm]{Lemma}
\newtheorem*{lemma*}{Lemma}
\newtheorem{cor}[thm]{Corollary}
\newtheorem*{cor*}{Corollary}
\theoremstyle{definition}
\newtheorem{defn}[thm]{Definition}
\newtheorem*{defn*}{Definition}
\newtheorem{eg}[thm]{Example}
\newtheorem*{eg*}{Example}
\newtheorem{remark}[thm]{Remark}
\newcommand{\K}{\{0,1,\dots,k\}}
\newcommand{\Pk}{\mathcal{P}^{(k)}}
\newcommand{\Ckp}{\mathcal{C}^{(k+1)}}
\newcommand{\tr}{z}
\def\bfh{{\bf h}}
\def\uu{{\bf u}}
\def\lbl{{L}}
\def\la{{\lambda}}
\def\core{{\lambda}}
\def\bndd{{\mu}}
\def\wt{{\Lambda}}
\def\Anull{{A_\emptyset}}
\def\hook{{\mathrm{hook}}}
\def\cella{{\mathrm{add}}}
\def\cellr{{\mathrm{del}}}
\def\inv{{\mathrm{inv}}}
\def\Acos{{\widetilde{\mathcal{A}}^0_k}}
\def\A{{\mathcal{A}}}
\def\Ct{{\tilde C}}
\def\Cb{{\bar C}}
\def\Ch{{\hat C}}
\definecolor{darkblue}{rgb}{0,0,0.7} 
\newcommand{\darkblue}{\color{darkblue}} 
\newcommand{\deft}[1]{\emph{\darkblue #1}} 
\begin{document}

\title{Symmetries on the lattice of $k$-bounded partitions.\thanks{This work is supported in part by CRC and NSERC.
This paper originated in a working session at the Algebraic
Combinatorics Seminar at the Fields Institute with the active
participation of C.~Benedetti, N.~Bergeron, Z.~Chen, H.~Heglin, D.~Mazur and H. Thomas.}
\thanks{This research was facilitated by computer exploration using the open-source
mathematical software \texttt{Sage}~\cite{sage} and its algebraic
combinatorics features developed by the \texttt{Sage-Combinat}
community~\cite{sage-co}.
 }}  

\author[4]{Chris Berg}
\author[2]{Nathan Williams}
\author[1,3] {Mike Zabrocki}
\affil[1]{Fields Institute\\ Toronto, ON, Canada}
\affil[2]{Universit\'e du Qu\'ebec \`a Montr\'eal, Montr\'eal, QC, Canada}
\affil[3]{York University\\ Toronto, ON, Canada}
\affil[4]{Google}

\maketitle

\begin{abstract}
In 2002 R.\ Suter \cite{su} identified a dihedral symmetry on certain order ideals in Young's lattice
and gave a combinatorial action on the partitions in these order ideals.
Viewing this result geometrically, the order ideals can be seen to be seen to be in bijection with
the alcoves in a $2$-fold dilation in the geometric realization of the affine symmetric group.  By considering
the $m$-fold dilation we observe a larger set of order ideals in
the $k$-bounded partition lattice that was considered by L.\ Lapointe, A.\ Lascoux, and J.\ Morse \cite{LLM}
in the study of $k$-Schur functions.  We identify the order ideal and the cyclic action on it
explicitly in a geometric and combinatorial form.
\end{abstract}

\noindent
AMS Subject Classification Numbers: 05E18, 51F15\\
Keywords: affine reflection groups, symmetry\\

\section{Introduction}
\label{sec:intro}

For each $k \in \mathbb{N}$, R.\ Suter described a set $Y^k$ of partitions with unexpected 
dihedral symmetries~\cite{su}.  The set $Y^k$ is an order ideal in Young's lattice, and so to 
specify $Y^k$ it is enough to specify that its maximal elements are exactly those rectangles with hook-lengths at most $k$.

\begin{repdefn}{defn:yk}[\cite{su}]
Let $R_i := (i^{k+1-i})$ for $1 \leq i \leq k$.  
Then \[Y^k := \{ \core : \core \subseteq R_i \text{ for some } 1 \leq i \leq k\}.\]
\end{repdefn}

R.\ Suter proved that $Y^k$ has the same symmetries as the affine Dynkin diagram of type $\widetilde{\A}_k$ by explicitly describing a cyclic action on $\core \in Y^k$ which, along with conjugation of a partition, gives a dihedral action on $Y^k$.

Since any $\core \in Y^k$ has maximum hook-length at most $k$, $\core$ may be viewed as a 
$k$-bounded partition (or $(k+1)$-core), so that $Y^k$ can be equivalently described as an 
order ideal in the lattice of $k$-bounded partitions (defined in Section~\ref{sec:kbounded}).  
It is well-known that the dominant alcoves in type $\widetilde{\A}_k$ are indexed by $k$-bounded partitions.  

\begin{repthm}{thm:all_together}[\cite{LM2}, \cite{Lascoux}]
There is an order-preserving bijection between the lattice of $k$-bounded partitions, 
the lattice of $(k+1)$-cores, and weak order on the dominant alcoves in type $\widetilde{\A}_k.$
\end{repthm}

We can therefore associate to each partition $\core \in Y^k$ a dominant alcove $A_\core$.  
More precisely, R.\ Suter showed in~\cite{su2} that $Y^k$ is in bijection with alcoves in 
$2\Anull$, the two-fold dilation of the fundamental alcove $\Anull$ in type $\widetilde{\A}_k$.  
As the fundamental alcove has a $(k+1)$-fold cyclic symmetry, so does $2\Anull$---and so does $Y^k$.  
Thus, the natural geometric symmetry on $2\Anull$ explains the unexpected symmetry of $Y^k$.

The bijection between $Y^k$ and $2\Anull$ sends the maximal elements $R_i \in Y^k$ to the alcoves 
$A_{R_i} \in 2\Anull$ with a facet on the hyperplane $H_{\alpha_0,2}=\{x:\langle \alpha_0,x \rangle =2\}$ 
(where $\alpha_0$ is the highest root) and a single vertex on the
hyperplane $H_{\alpha_0,1}=\{x:\langle \alpha_0,x \rangle =1\}$.  
Such an alcove is characterized by the unique fundamental weight 
$\wt_i$ that is a vertex of $A_{R_i}$ not on the hyperplane $H_{\alpha_0,2}$.  We emphasize this with 
correspondence with a proposition.

\begin{repprop}{prop:rectangles_and_fund_weights}
The map $R_i \mapsto \wt_{i}$ is a bijection between the maximal elements of $Y^k$ and the fundamental weights.
\end{repprop}

This first motivation is reviewed in more detail in Section~\ref{sec:suter_symmetry}.

\vspace{1em}

Our second motivation comes from $k$-Schur functions, which---like the dominant alcoves in 
type $\widetilde{\A}_k$---are also indexed by $(k+1)$-cores $\core$ or $k$-bounded partitions $\bndd$.  Proposition~\ref{prop:rectangles_and_fund_weights} has an algebraic analogue in the theory, whereby the $k$-Schur function $s_{R_i}$ may be expressed as a sum over the $\A_k$-orbit of the fundamental weight $\wt_i$.  These rectangles $R_i$ also
appear in L.\ Lapointe and J.\ Morse's paper~\cite{LM3}, where they prove the following theorem.

\begin{repthm}{thm:LM}[Theorem 40~\cite{LM3}]
For a rectangle $R_i$ and a $k$-bounded partition $\bndd$, 
\[s_\bndd^{(k)} s_{R_i}^{(k)} = s_{\bndd \cup R_i} ^{(k)},\] 
where $\bndd_1 \cup \bndd_2$ denotes the partition obtained by combining the parts of 
$\bndd_1$ and $\bndd_2$ and placing them into non-increasing order.
\end{repthm}

In Section \ref{sec:gen}, we consider products of the form
\[s^{(k)}_{\cup_{j=1}^{m-1} R_{i_j}} = \prod_{j=1}^{m-1} s^{(k)}_{R_{i_j}},\]
which give a generalization of the algebraic analogue of Proposition~\ref{prop:rectangles_and_fund_weights}, connecting partitions indexed by $\cup_{j=1}^{m-1} R_{i_j}$ and
sums of weights $\sum_{j=1}^{m-1} \Lambda_{i_j}$.

We are led to define a generalization of $Y^k$ 
to the $k$-bounded partition lattice of L.\ Lapointe, A.\ Lascoux and J.\ Morse~\cite{LLM} by taking 
all $k$-bounded partitions $\bndd$ such that the corresponding alcove $A_\bndd$ is contained in the $m$-fold dilation of the fundamental alcove $m\Anull$.  By construction, the geometric symmetry on $m\Anull$ induces a symmetry on $Y_m^k$, generalizing 
R.\ Suter's construction when $m=2$.


\begin{repdefn}{def:ymk}
Let $Y_m^k := \{ \bndd : A_\bndd \in m\Anull\}.$
\end{repdefn}

Exactly as in the case of $Y^k$---since the bijection between $k$-bounded partitions and alcoves is order 
preserving---$Y_m^k$ is an order ideal in the lattice of $k$-bounded partitions and so it is enough to 
characterize the maximal elements.  As in the $m=2$ case, the maximal alcoves in the $m\Anull$ have a
facet which is on the $H_{\alpha_0,m} 
= \{ x: \left< \alpha_0, x \right> = m \}$ hyperplane and are in bijection 
with the dominant weights on the hyperplane $H_{\alpha_0,m-1} 
= \{ x: \left< \alpha_0, x \right> = m-1 \}$.  These weights are all sums of $m-1$ fundamental 
weights of the form $\sum_{j=1}^{m-1} \wt_{i_j}$.

Led by the connection to $k$-Schur functions and the bijection between weights $\sum_{j=1}^{m-1} \wt_{i_j}$ and maximal alcoves in 
$m\Anull$, we arrive at our main theorem: a characterization of the maximal elements of $Y_m^k$.

\begin{repthm}{thm:mainthm}
The maximal $k$-bounded partitions in $Y_m^k$ are the partitions of the form $\cup_{j=1}^{m-1} R_{i_j}.$
\end{repthm}

Section~\ref{sec:proofs} is devoted to proving Theorem~\ref{thm:mainthm}.  We show that
there are ${m+k-2}\choose{k-1}$ alcoves in $m \Anull$ with a face on the hyperplane $H_{\alpha_0,m}$.  These alcoves are indexed by the $k$-bounded partitions which are unions 
of $m-1$ maximal rectangles.

In Section~\ref{sec:applications}, 
we consider applications of Definition~\ref{def:ymk} and Theorem~\ref{thm:mainthm}.  We first show how to directly reconstruct a $(k+1)$-core or the $k$-bounded partition
given the weights corresponding to the vertices of an alcove $A_\core$.  In terms of $k$-bounded partitions,
the answer may be given as follows.
\begin{repcor}{cor:intersect}
Let $A_\bndd$ be an alcove with vertices $v_d = \sum_{j=1}^k \wt_{i_j}$ for $0 \leq d \leq k$ and
let $\bndd_d = \cup_{j=1}^{m-1} R_{i_j}.$  Then
$\bndd$ is the largest partition that is contained in of all of the $\bndd_d$.
\end{repcor}

We next give an explicit formula for a geometric map, generalizing R.\ Suter's symmetry on $Y^k = Y_2^k$ to $Y_m^k$.
Using the map from alcoves to $k$-bounded partitions and $(k+1)$-cores, we show how this
map is computed on $Y_m^k$.  In the case that $m$ and $k+1$ are relatively prime, there is a central alcove of $Y_m^k$ fixed by
rotation.  In Section~\ref{subsec:translate}, we describe a region that is similar to $Y_m^k,$ but where this central alcove has been sent to the fundamental alcove.

We finally show that $Y^k$ under R.~Suter's cyclic action exhibits the cyclic sieving phenomenon, proving conjectures of V.~Reiner and D.~Stanton.  The general result for $Y_m^k$ was proven in~\cite{TW}.




\section{The combinatorics of $\widetilde{\A}_k$}
\label{sec:comb_affine_A}

In this section we review the geometry of affine type $\widetilde{\A}_k$, 
Grassmannian permutations, $(k+1)$-cores, and $k$-bounded partitions to establish the
dictionary associated with the following well-known result.

\begin{thm}[\cite{LM2}, \cite{Lascoux}]
\label{thm:all_together}
There is an order-preserving bijection between the lattice of $(k+1)$-cores, the lattice of 
$k$-bounded partitions, and weak order on the dominant alcoves in type $\widetilde{\A}_k.$
\end{thm}

\subsection{$\widetilde{\A}_k$ and Affine Grassmannian elements}

Let $\Delta:=\{\alpha_i\}_{1 \leq i \leq k}$ be the set of \deft{simple roots} of type $\A_k$.  
These form the basis for a vector space $V$ with a symmetric bilinear form 
$\langle \cdot,\cdot \rangle$ given by:
 \[\langle \alpha_i, \alpha_j \rangle = \left\{
\begin{array}{ll}
2 & \textrm{if }i=j,\\
-1 & \textrm{if } i=j\pm  1,\\
0 & \textrm{otherwise.} 
\end{array}
\right.\]

For $v \in V$, we let $H_{v,p}:=\{x \in V : \langle v,x\rangle = p\}$ and $H_{v}:=H_{v,0}$.  
Let $s_i$ be the reflection of a vector $v$ through the hyperplane $H_{\alpha_i}$
so that the set of reflections $\{s_i\}_{1 \leq i \leq k}$ are the reflections in the
hyperplanes perpendicular to the simple roots.  These elements generate a group we denote by $\A_k$, which is isomorphic to the symmetric group on $k+1$ letters. 

Let $\Phi:=\{ w \alpha_i : w \in \A_k, \alpha_i \in \Delta\}$ be the set of \deft{roots} of type $\A_k$.  
The \deft{affine arrangement} is the set of hyperplanes $\{ H_{\alpha,p} : \alpha\in\Phi, p\in \mathbb Z\}$.  
We write $\{\Lambda_i\}_{1 \leq i \leq k}$ for the set of \deft{fundamental weights}---the basis dual to the 
simple roots, defined by $\langle \alpha_i,\Lambda_{i} \rangle=1$.  
The $\mathbb{Z}$-span of the $\{ \Lambda_i \}_{1\leq i \leq k}$ are the \deft{weights}.  
These are also the zero dimensional intersections of the $\{H_{\alpha,p}\}_{\alpha \in \Phi}$.  
The element $\alpha_0 = \alpha_1+\dots+\alpha_k \in \Phi$ is the
\deft{highest root}.  From the definition, $\langle \alpha_0, \Lambda_i \rangle=1$ for $1\leq i \leq k$.

The \deft{dominant chamber} is the closed region bounded by the hyperplanes 
$H_{\alpha_i,0}$; it is also the nonnegative span of the fundamental weights.  We denote the dominant 
chamber by $C = \{ \sum_{i=1}^k b_i \Lambda_i : b_i \geq 0 \}$.  A weight is called \deft{dominant} 
if it lies in the dominant chamber.  The \deft{fundamental alcove} is the closed region bounded by the 
walls of the dominant chamber, together with the hyperplane $H_{\alpha_0,1}$.  We denote it by 
$\Anull:= \{ \sum_{i=1}^k b_i \Lambda_i :  b_i \geq 0 \hbox{ and } \sum_{i=1}^k b_i \leq 1 \}$.

The \deft{affine symmetric group} $\widetilde{\A}_k$ is generated by the simple reflections of 
type $\A_k$ along with an additional generator $s_0$, which
acts as reflection in $H_{\alpha_0,1}$.  The generators $\{s_i\}_{i=0}^k$ satisfy the relations:
  \begin{align*}
		s_i^2 &= 1 \textrm{ for } 1\leq i \leq k\\
		s_is_j &= s_js_i   \textrm{ if } i-j \neq \pm 1\\
		s_is_{i+1}s_i &= s_{i+1}s_is_{i+1}  \textrm{ for } 1\leq i \leq k
			\end{align*}
 where $i-j$ and $i+1$ are understood to be taken modulo $k+1$.

For the purposes of uniformity, if $\eta = \sum_{i=1}^k \eta_i \Lambda_i$, then let
	$\Lambda_0 = \Lambda_{k+1} = {\bf 0}$ be a virtual vector with coefficient $(1 - \sum_{i=1}^k \eta_i)$.
	We can describe the action of $s_i$ on any weight $\eta = (1 - \sum_{i=1}^k \eta_i)\Lambda_0+\sum_{i=1}^k \eta_i \Lambda_i$ using the action of $s_i$ on a dominant weight $\Lambda_j$: 

	\begin{equation} \label{eq:siactionlambdaj}
	 s_i( \Lambda_j ) = \begin{cases}
	\Lambda_j&\hbox{ if }j\neq i\\
	\Lambda_{i+1} - \Lambda_i + \Lambda_{i-1}&\hbox{ if }i=j.
	\end{cases}
	\end{equation}

A \deft{reduced word} for an element $w \in \widetilde{\A}_k$ is a minimal word for $w$ in the simple 
reflections; the \deft{length} $\ell(w)$ is the length of any such minimal word.  
The group $\widetilde{\A}_k$ acts on the vector space $V$ and acts faithfully on the fundamental alcove $\Anull$.  

For $w \in \widetilde{\A}_k$, we define $A_w := w \Anull$ (so that $A_{uv}=u A_v= uv\Anull$). 
The $A_w$ are called \deft{alcoves}; 
the union of all alcoves is $V$.  The \deft{inversion set} $\inv(w)$ of $w$ is the set of hyperplanes 
that lie between $A_w$ and $\Anull$ (see Corollaries 1.4.4 and 1.4.5 of~\cite{BB}); then $\ell(w)=|\inv(w)|$.  
The \deft{weak order} is a partial order on $\widetilde{\A}_k$ defined by 
$w \leq u$ iff $\inv(w) \subseteq \inv(u)$ (Proposition 3.1.3 of~\cite{BB}).

Just as with finite permutations, it will be useful for computations to faithfully model the action of the
simple reflections on the affine symmetric group.  
In this way, the set of \deft{affine permutations} is the set of bijections 
\[w:\mathbb{Z}\to \mathbb{Z} \text{ such that } \sum_{i=1}^{k+1} w(i)=\binom{k+2}{2} 
\text{ and } w(i+(k+1))=w(i)+(k+1).\]  Such permutations are specified by their values on the ``window'' $1,2,\ldots,k+1.$
We choose our convention for the action on integers by assuming that if
$w(a) = i$ and $w(b) = i+1$, then 
$(s_i w)(a) = i+1$ and $(s_i w)(b) = i$.
We may therefore refer to an affine permutation $w$ using the \deft{one-line notation} $[w(1),w(2),\cdots,w(k+1)]$
because these $k+1$ values determine the action of $w$ on all integers.  We will also use the notation 
$$((i,j)) = s_i s_{i+1} \cdots s_{j-2} s_{j-1} s_{j-2} \cdots s_{i+1} s_i$$
to represent the affine symmetric group element which interchanges $i$ and $j$ in the affine permutation $w$ 
(where the indices of the reflections are taken mod $(k+1)$).
Note that $1\leq i \leq k-b+1$, the transposition $((i,i+b+a(k+1)))$ is the reflection
across the hyperplane $H_{\alpha_i + \alpha_{i+1} + \cdots + \alpha_{i+b-1}, -a }$.

Let $\phi(s_i) := s_{i+1},$ where the index is taken mod $k+1$, and extend $\phi$ to elements of $\A_k$ by acting on a reduced word $\mathbf{w} = s_{i_1} s_{i_2} \cdots s_{i_r}$ by 
$\phi(\mathbf{w}) := \phi(s_{i_1}) \phi(s_{i_2}) \cdots \phi(s_{i_r})$.  

\begin{prop}
	If $\mathbf{w}$ and $\mathbf{w'}$ are equivalent as reduced words in $\widetilde{\A}_k$, 
	then $\phi{\mathbf{w}}$ and $\phi{\mathbf{w'}}$ are also equivalent as reduced words in $\widetilde{\A}_k$.
\end{prop}

\begin{proof}
This follows from Tit's lemma that any two reduced words are connected by braid 
moves---if a sequence of braid moves connects the words $\mathbf{w}$ and $\mathbf{w'}$, 
then the image of these braid moves connects $\phi{\mathbf{w}}$ and $\phi{\mathbf{w'}}$.
\end{proof}

We can therefore refer to $\phi(w)$ as the image of $w$ under this reindexing.  We also allow $\phi$ to act on weights by $\phi(\Lambda_j):=\Lambda_{j+1}$.  For any weight $\eta$, $\phi(s_i)\phi(\eta)=\phi(s_i \eta),$ where we incorporate $\Lambda_0$ as above.

An alcove $A_w$ lies in the dominant chamber $C$ if and only if $w$ is a minimal length coset representative of $\widetilde{\A}_k/\A_k$~\cite{H}.  The set of minimal length (right) coset 
representatives of $\widetilde{\A}_k/\A_k$ is denoted $\Acos$.

\begin{defn}
\label{def:affinegrassmannian}
A permutation $w \in \Acos$ is an \deft{affine Grassmannian permutation}.  
\end{defn}

In one-line notation, the affine Grassmannian permutations are the increasing permutations---those $w$ such that $w(i)<w(i+1)$ for $1 \leq i \leq k+1$.

An alcove may be specified by its $k+1$ vertices, which---as the intersections of any $k$ of its 
facets---are weights.  We can describe the action of a generator of $\widetilde{\A}_k$ on the 
vertices of an alcove as follows.  To each weight $\eta = \sum_{i=1}^{k} \eta_i \Lambda_i$, 
we associate the label $\lbl(\eta) = (\sum_{i=1}^{k} i \eta_i)\mod{(k+1)}$. 
Then every alcove contains exactly one weight of every label in $\{0,1, \ldots, k\}$.

\begin{prop}[Lemma 6.1~\cite{S}]
\label{prop:oneone}
Suppose $A_w$ has vertices 
$v_0, v_1, \dots, v_k$ with $\lbl(v_j) = j$.  Then $A_{w s_i}$ is the alcove which has vertices 
$\{ v_j : j\neq i\}$ and the vertex obtained by reflecting $v_i$ across the affine hyperplane 
spanned by $\{v_j:j\ne i \}$.
\end{prop}




Define a (non-stuttering) \deft{gallery} of alcoves to be a sequence $(A_{w_1},A_{w_2},\ldots,A_{w_n})$ 
such that $A_{w_i}$ and $A_{w_{i+1}}$ are distinct and share a facet (see e.g. \cite{Garrett}).
Proposition~\ref{prop:oneone} 
gives a bijection between galleries of alcoves and words in the simple generators $\{s_i\}_{i=0}^k$ 
which are equivalent to $w_n$.  Given a reduced word for $w \in \widetilde{\A}_k$, one can therefore 
efficiently determine the location of $A_w$, as illustrated in Figure~\ref{eg:walk}.


\begin{figure}[htbp]
\begin{center}
\includegraphics[width=3in]{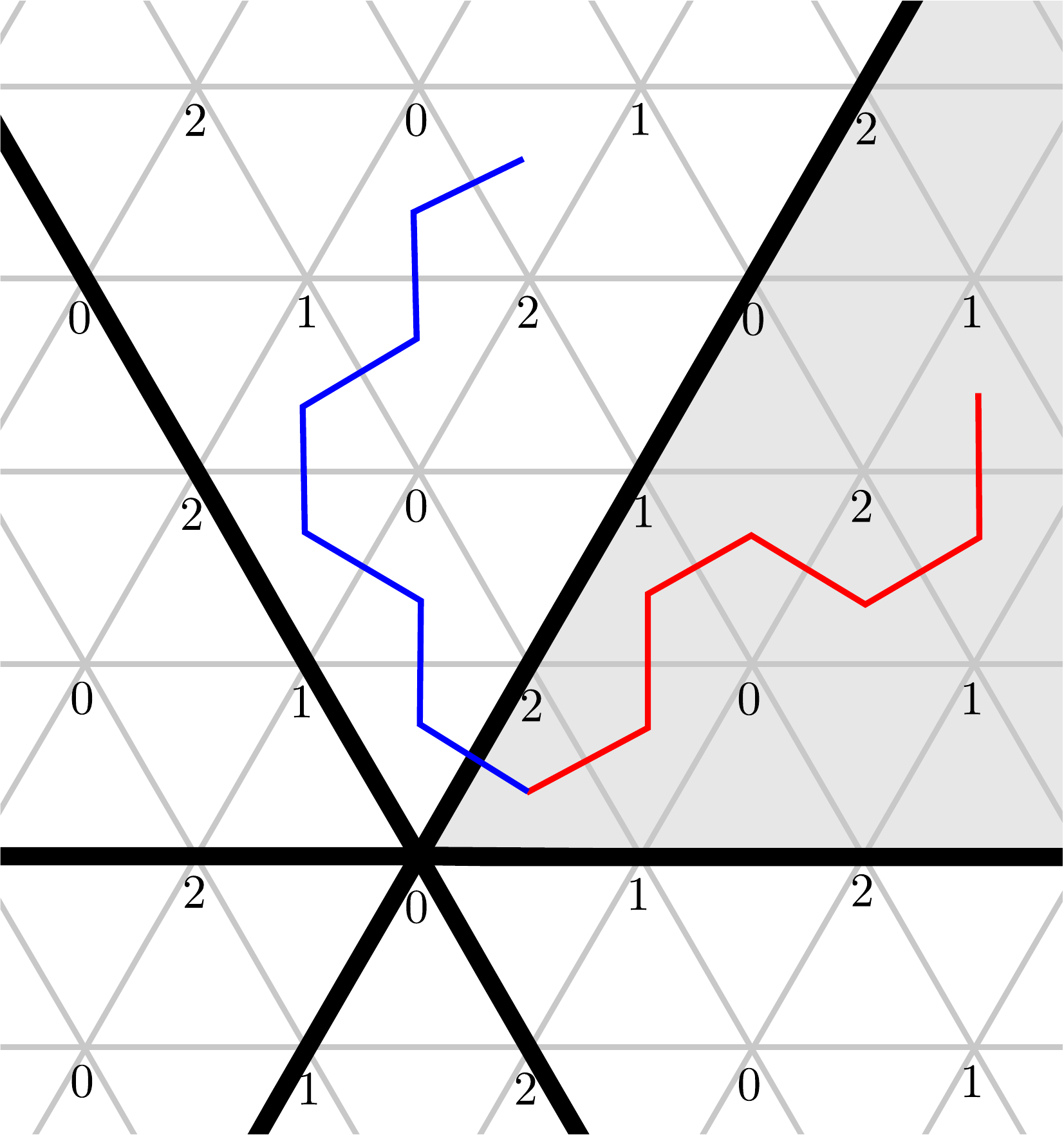}\hskip .2in\includegraphics[width=.5in]{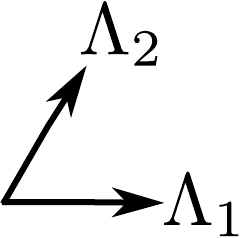}
\end{center}
\caption{The affine arrangement for $\widetilde{\A}_k$ with two example galleries each of which start at $\Anull$.  
The thick black lines are the hyperplanes $\{H_{\alpha} : \alpha \in \Phi\}$, the region shaded in gray is 
the  dominant  chamber,  and  each  weight  $\eta$  is  marked  with  its  label  $\lbl(\eta)$.    The  red  gallery 
terminates  in  the  dominant  alcove  $A_{s_0  s_1  s_2  s_1  s_0  s_1}$  and  remains  in  the  dominant  chamber, 
while the blue gallery ends at $A_{s_1 s_0 s_2 s_1 s_2 s_0 s_1}$ and leaves the dominant chamber in the 
second facet of the gallery.}
\label{eg:walk}
\end{figure}



\subsection{$(k+1)$-cores}
A \deft{partition} $\lambda$ is a finite non-increasing sequence of positive integers 
$\lambda=(\lambda_1 \geq \lambda_2 \geq \cdots \geq \lambda_n)$.  A partition $\lambda$ 
has an associated \deft{Young diagram} of cells 
\[\{(i,j) : 1 \leq j \leq \lambda_i, 1 \leq i \leq n\} \subset \mathbb{N}\times \mathbb{N},\] 
which we will freely associate with $\lambda$ itself (the cell $(1,1)$ is drawn at the bottom left of the diagram).
\deft{Young's lattice} is the set of all partitions ordered by inclusion of Young diagram.  
The \deft{transpose} of $\lambda$ is the partition 
$\lambda':=\left(|\{ \lambda_i \geq j : 1\leq i \leq n\}| \right)_{j=1}^{\lambda_1}$.  
The \deft{hook} of the cell $(i,j)$ is $\hook_\lambda(i,j) := \lambda_i + \lambda_j' - i -j +1$.

\begin{defn}
	\label{def:ckp}
A partition $\core$ is called a \deft{$(k+1)$-core} if $\core$ has no hook of size a multiple of $(k+1)$.  
Let $\Ckp$ denote the set of all $(k+1)$-cores.
\end{defn}

The affine symmetric group $\widetilde{\A}_k$ has an action on $\Ckp$, defined as follows.  Let the 
\deft{content} of a cell $(i,j)$ be the integer $(j-i) \mod k+1$. We call a cell \deft{$i$-addable} 
(resp. \deft{$i$-removable}) if $\core$ with (resp. without) the cell is a partition.  Let 
$\cella_i(\core)$ be the set of all addable cells for $\core$, and let $\cellr_i(\core)$ be the set 
of its removable cells.  It is not hard to see (using, for example, the abacus model) that one cannot have 
$\cella_i(\core) \neq \emptyset$ and $\cellr_i(\core) \neq \emptyset$ simultaneously.

If $\core \in \Ckp$, then we let the generators $\{s_i\}_{i=0}^k$ of $\widetilde{\A}_k$ act by

\begin{equation*}
\core \cdot s_i   = \begin{cases}
\core \cup \cella_i(\core) & \hbox{ if } \cella_i(\core) \neq \emptyset\\
\core \setminus \cellr_i(\core) & \hbox { if } \cellr_i(\core) \neq \emptyset \\
\core & \hbox { otherwise.}
\end{cases}
\end{equation*}

\begin{remark}  We make the action on $(k+1)$-cores and (and similarly for $k$ bounded partitions in the next section)
a right action with the notation $\core \cdot w$.  This is different than notation in other references 
(e.g. \cite{LM2, LLMSSZ, Lam2}),
however this is because we are considering both left and right actions on many different objects
and we want to ensure that they all agree.
By assuming that the left action of $\widetilde{\A}_k$ is reflection across hyperplanes and then the right action 
on alcoves is given in Proposition \ref{prop:oneone} and should follow notation for the action on cores.
\end{remark}

\begin{thm}[Proposition 40~\cite{LM2}, see also \cite{Lascoux}]
\label{thm:core_to_grassmannian}
If $\core \in \Ckp$, then $\core \cdot s_i \in \Ckp$.  The action of $\widetilde{\A}_k$ induces an order-preserving bijection between $\Ckp$ and the affine Grassmannian permutations $\Acos$.
\end{thm}

Call these two bijections $\mathfrak{r}: \Ckp \to \Acos$ and $\mathfrak{c}=\mathfrak{r}^{-1}.$

\begin{eg}
	
If $k=4$, then we compute the $5$-core corresponding to 
$w=  s_0 s_1 s_2 s_3 s_4 s_0 s_3 s_2  \in  \widetilde{\A}_4^{0}$  by 

\squaresize=10pt
\[
\emptyset \rightarrow \young{0\cr} \rightarrow \young{&1\cr} \rightarrow \young{&&2\cr}
\rightarrow \young{&&&3\cr}\rightarrow \young{4\cr&&&&4\cr}
\]
\[
\rightarrow \young{&0\cr&&&&&0\cr}\rightarrow \young{3\cr&\cr&&&&&\cr}\rightarrow \young{2\cr\cr&\cr&&&&&\cr}
\]
\end{eg}

\subsection{$k$-bounded partitions}
\label{sec:kbounded}

\begin{defn}
A partition $\bndd$ is called \deft{$k$-bounded} if $\bndd_i \leq k$ for all $1 \leq i \leq \ell(\bndd)$.  
Let $\Pk$ be the set of all $k$-bounded partitions.
\end{defn}

There is a simple bijection from $\Ckp$ to $\Pk$---the $(k+1)$-core $\core \in \Ckp$ is sent to the $k$-bounded partition
$\mathfrak{p}(\core) := (\mathfrak{p}(\core)_i)_{i=1}^n,$ where 
$\mathfrak{p}(\core)_i := |\{ (i,j) \in \core : \hook_\core(i,j)\leq k\}|$.  
The reverse bijection takes the $k$-bounded partition $\bndd \in \Pk$ and produces the 
$(k+1)$-core $\mathfrak{c}(\bndd)$ by pushing the rows of $\bndd \in \Pk$ to the right 
until the only cells with hook-lengths less than or equal to $k$ are those that were originally in $\bndd$
(a more detailed description may be found in \cite{LM2} or \cite{LLMSSZ}).

In~\cite{LM2}, L.\ Lapointe and J.\ Morse studied a poset on $k$-bounded partitions arising 
from their $k$-Pieri rule with A.\ Lascoux~\cite{LLM}.  They introduced the order as a $k$-analogue 
of Young's lattice---it is the poset on $\Pk$ generated by the covering relations 
$\bndd_1 \lessdot \bndd_2$ if $\bndd_1 \subseteq \bndd_2,$ $\bndd_1^{\omega_k} \subseteq \bndd_2^{\omega_k}$, 
and $|\bndd_2| - |\bndd_1|=1,$ where $\bndd^{\omega_k}:=\mathfrak{p}(\mathfrak{c}(\bndd)')$ is the \deft{$k$-conjugate}.

\begin{thm}[Theorem 7, Corollary 25~\cite{LM2}]
\label{thm:core_to_bounded}
The maps $\mathfrak{c}$ and $\mathfrak{p}$ are order-preserving bijections between the lattice on $\Ckp$ and $\Pk$.
\end{thm}

Note that the $k$-bounded partitions therefore inherit an action of $\widetilde{\A}_k$ from the $(k+1)$-cores.


\begin{eg}\label{eg:pushout}
For an example of the maps $\mathfrak{c} : \Pk \rightarrow \Ckp$ and
$\mathfrak{p} : \Ckp \rightarrow \Pk$ which concerns us in the next sections,
consider $k+1 = 5$ and $\bndd = (4^2,3^2,1^4)$ which is a concatenation of 
a sequence of rectangles each with a hook length of $4$.  The algorithm for computing $\mathfrak{c}(\mu)$
says that the first $4$ rows are pushed $1$ cell left, then 
\begin{itemize} 
\item first $2$ rows pushed $3$ cells left
\item first row is pushed $4$ cells left
\end{itemize}
The final core diagram has the following Ferrer's diagram (with the cells with hook-length greater than $4$ shaded).
\[
\young{
\cr
\cr
\cr
\cr
\gsqr&&&\cr
\gsqr&&&\cr
\gsqr&\gsqr&\gsqr&\gsqr&&&&\cr
\gsqr&\gsqr&\gsqr&\gsqr&\gsqr&\gsqr&\gsqr&\gsqr&&&&\cr}
\]
Therefore $\mathfrak{c}(4^2,3^2,1^4) = (12,8,4,4,1,1,1,1) = \core$.  The unshaded cells of the
core are those
with hook-length less than $5$.  They form a skew shape consisting of disjoint rectangles, and the $4$ bounded partition $\bndd$ can be  computed from the $5$-core
$\core$ by deleting the shaded cells and left justifying them.
\end{eg}

We can pass directly from $k$-bounded partitions to
affine Grassmannian permutations as follows.  Given a $k$-bounded partition $\bndd$, fill the cells $(i,j)$ with
the simple generator $s_{j-i \mod (k+1)}$.  The bijection is given by forming the element 
$\mathfrak{r}(\bndd)$ with reduced word obtained from reading the rows of the diagram from
left to right and bottom to top (noting that our partition diagrams use the French convention with the
largest row on the bottom).  Since the word is reduced, if we let the \deft{length} of 
$\bndd$ be $\ell(\bndd):=|\{ (i,j) \in \bndd\}|$, then $\ell(\bndd)=\ell(\mathfrak{r}(\bndd))$.  
We abbreviate the alcove $A_{\mathfrak{r}(\bndd)}$ as $A_{\bndd}$.

\begin{thm}[Corollary 48~\cite{LM2}]
\label{thm:bounded_to_grassmannian}
There is an order-preserving bijection between the lattice on $\Pk$ and the affine Grassmannian 
permutations $\Acos$.
\end{thm}

Call the two bijections $\mathfrak{r}: \Pk \to \Acos$ 
and $\mathfrak{p}=\mathfrak{r}^{-1}.$

\begin{eg}
Let $k=4$, then the $4$-bounded partition $(4,2,1,1)$ has a diagram which we fill the $(i,j)$ cell
with the element $s_{j-i \mod 5}$ as
$$\young{s_2\cr s_3\cr s_4&s_0\cr s_0&s_1&s_2&s_3\cr}~.$$
Then by reading the word of the entries of this tableau, the
partition is identified with $\mathfrak{r}(4,2,1,1) = s_0 s_1 s_2 s_3 s_4 s_0 s_3 s_2$.
\end{eg}

\section{Suter symmetry}
\label{sec:suter_symmetry}

For each $k \in \mathbb{N}$, R.\ Suter described a set $Y^k$ of $2^k$ partitions with unexpected 
dihedral symmetries~\cite{su}.  The subset $Y^k$ is an order ideal in Young's lattice, and so to 
specify $Y^k$ it is enough to specify that its maximal elements are exactly those rectangles with 
hook-lengths at most $k$.

\begin{defn}[\cite{su}]
\label{defn:yk}
Let $R_i := (i^{k+1-i})$ for $1 \leq i \leq k$.  Then we define 
\[Y^k := \{ \core : \core \subseteq R_i \text{ for some } 1 \leq i \leq k\}.\]
\end{defn}

R.\ Suter proved that $Y^k$ had an action of the dihedral group of order $2(k+1)$, coming from 
the usual symmetry of partition transposition along with the $(k+1)$-fold cyclic action~\cite{su}:

\[(\core_1,\core_2,\ldots,\core_n) \mapsto (k-\core_1,\core'_1-1,\core'_2-1,\ldots,\core'_{n'}-1)'.\]

This symmetry is illustrated in Figure~\ref{fig:suter3} for $k = 2,3,4$.

\begin{figure}[htbp]
\begin{center}
\includegraphics[width=1.2in]{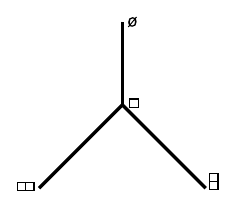} \hskip .3in
\includegraphics[width=2in]{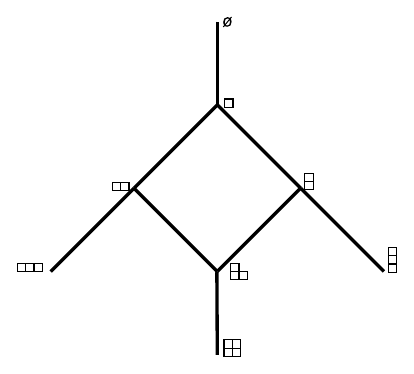}
\end{center}
\begin{center}
\includegraphics[width=3in]{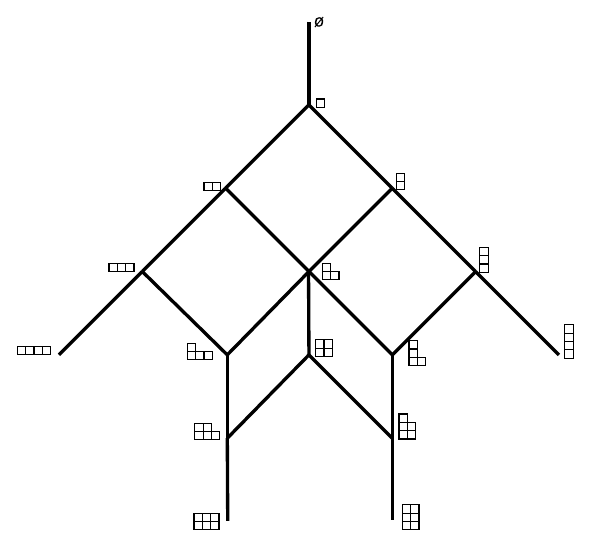}
\end{center}
\caption{Three examples of the $k+1$ dihedral symmetry of $Y^k$ for $k = 2,3,4$.}\label{fig:suter3}
\end{figure}

In~\cite{su2}, R.\ Suter interpreted $Y^k$ as the set of abelian ideals in a Borel subalgebra of 
the Lie algebra of type $\A_k$.  He connected this interpretation with a remarkable result of D.\ Peterson, establishing 
a bijection between $Y^k$ and alcoves in $2\Anull$.  Briefly, the partitions in $Y^k$ correspond to the 
inversion set of the affine Grassmannian permutation.  As the fundamental alcove has a $(k+1)$-fold 
cyclic symmetry, so does $2\Anull$---and so does $Y^k$.  Thus, the natural geometric symmetry on 
$2\Anull$ explains the unexpected symmetry of $Y^k$.  Figure~\ref{notfigure1} illustrates the 
correspondence between $Y^2$ and $2\Anull$ in type $\widetilde{\A}_2$.  

\begin{figure}[htbp]
\label{notfigure1}
\begin{center}
\includegraphics[width=2in]{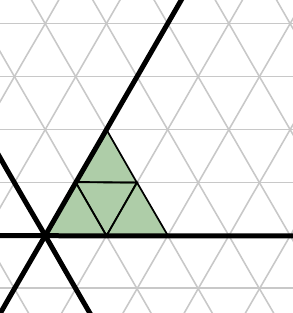}\hskip .2in\includegraphics[width=.5in]{direction2.pdf}
\end{center}
\caption{The two-fold dilation $2\Anull$ of the fundamental alcove of $\widetilde{\A}_2$.  The
alcoves within $2\Anull$ are in bijection with $Y^2=\{ \emptyset, (1), (2), (1,1) \}$.}
\end{figure}

Using the constructions of Section~\ref{sec:comb_affine_A}, the bijection betwen $Y^k$ and $2\Anull$ 
proceeds in the following way.  Since any $\core \in Y^k$ has maximum hook-length at most $k$, $\core$ 
may be viewed as a $(k+1)$-core (or $k$-bounded partition) so that $Y^k$ can be equivalently described 
as an order ideal in the lattice of $(k+1)$-cores.  By Theorem~\ref{thm:core_to_grassmannian}, we can 
therefore associate to each partition $\core \in Y^k$ a dominant alcove $A_\core$.

This sends the maximal elements $R_i \in Y^k$ to the alcoves $A_{R_i} \in 2\Anull$ with a facet on 
the hyperplane $H_{\alpha_0,2}:=\{x:\langle \alpha_0,x \rangle =2\}$.  Such an alcove is characterized 
by the unique fundamental weight $\wt_i$ that is a vertex of $A_{R_i}$ not on the hyperplane $H_{\alpha_0,2}$.  
We emphasize this with correspondence with a proposition.

\begin{prop}
\label{prop:rectangles_and_fund_weights}
There is a bijection between the rectangles $R_i$ and the fundamental weights $\wt_{i}$ for $1 \leq i \leq k$.
\end{prop}

This is trivially true because the two sets each have $k$ elements, but we can refine the proposition as follows.

\begin{lemma}
For $1 \leq i \leq k$, $\mathfrak{r}(R_{i}) \Anull = A_{\mathfrak{r}(R_{i})} = \Anull +\wt_{i}.$
\label{lem:pseudo_1}
\end{lemma}
\begin{proof} 

Since $w:=\mathfrak{r}(R_{i})$ contains no simple reflection $s_i$, consider $\phi^{-i}(w)$.  
This element $\phi^{-i}(w)$ contains no simple reflection $s_0$ and is therefore an element of $\A_k$.
We determine $\phi^{-i}(w)$ by reading the rows of the rectangle using the map $\mathfrak{r}$.  The corresponding reduced word has the form
\[
\phi^{-i}(w) =  
(s_{k+1-i} s_{k+2-i} \cdots s_{k})
(s_{k-i} s_{k+1-i} \cdots s_{k-1})
\cdots(s_{2} s_{3} \cdots s_{i+1}) (s_{1} s_{2} \cdots s_{i})   .
\]
The corresponding permutation will have one-line notation
$[k+2-i,k+3-i,\ldots,k,k+1,1,2,\ldots,k+1-i]$, which has a single (finite) descent in position $i$.

The bounding $k$ \emph{finite} hyperplanes of $A_{\phi^{-i}(w)}$ may now be read off from the 
one-line notation from the values of $\phi^{-i}(w)(j)$ and $\phi^{-i}(w)(j+1)$---since
$\phi^{-i}(w)(j+1) = \phi^{-i}(w)(j)+1$ for every $j \neq i,k+1$, 
then the hyperplanes $H_{\alpha_j,0}$ bound the alcove for $j\neq i$. For $j=i$, 
we get the hyperplane $H_{\alpha_0,0}$.  The single \emph{affine} hyperplane corresponds to 
the values of $\phi^{-i}(w)(0)$ and $\phi^{-i}(w)(1)$, which are $-i$ and $k+2-i$ respectively.
Therefore the bounding hyperplanes are
\[\{ H_{\alpha,0} : \alpha \in \Delta, \alpha \neq \alpha_{k+1-i}\} \cup \{ H_{\alpha_0,0}, H_{\alpha_{k+1-i},-1}\}.\]
We must now compute the intersection of any $k$ of these to determine the corresponding weight.  
If we drop any particular $H_{\alpha_j,0}$ from the first part of the union, we specify that the 
intersection is the point $x$ such that $\langle \alpha,x \rangle=0$ for $\alpha \neq \alpha_{k+1-i},\alpha_{j}$, 
$\langle \alpha_0,x \rangle=0$, and $\langle \alpha_{k+1-i},x \rangle=-1.$  The first condition specifies that $x$ 
is a linear combination of $\Lambda_j$ and $\Lambda_{i}$, from which the second and third conditions tell us that 
$x = \Lambda_0-\Lambda_{k+1-i}+\Lambda_{j}$.  Similarly, if we drop $H_{\alpha_0,0},$ then we obtain the point
 $x=2\Lambda_0-\Lambda_{k+1-i}$, and if we drop $H_{\alpha_{k+1-i},-1}$, then we obtain the origin 
$x=\Lambda_0.$  Thus, the vertices of $A_{\phi^{-i}(w)}$ are the weights 
\[\{\Lambda_0-\Lambda_{k+1-i}+\Lambda_{j} : j \neq k+1-i, j\neq 0\} \cup \{2\Lambda_0-\Lambda_{k+1-i}, \Lambda_0\}.\]
The vertices of $A_w$ now follow from applying $\phi^{i}$ to the vertices of $A_{\phi^{-i}(w)}$:
\[\{-\Lambda_{0}+\Lambda_i+\Lambda_{j} : j \neq 0, j\neq i\} \cup \{-\Lambda_{0}+2\Lambda_i, \Lambda_i\}.\]
In particular, since the vertices of $\Anull$ are $\{\Lambda_j : 0 \leq j \leq k\}$, we conclude that 
$\mathfrak{r}(R_{i}) \Anull = A_{\mathfrak{r}(R_{i})} =\Anull+\wt_i.$




\end{proof}

\begin{eg}
Fix $i=1$ and consider $R_1$ in type $\widetilde{\A}_2$.  
Then $w=\mathfrak{r}(R_1)=s_0s_2,$ so that $\phi^{-i}(w)=\phi^{-1}(w)=s_2s_1,$ whose alcove $A_{\phi^{-1}(w)}$ has vertices 
$\{\Lambda_0+\Lambda_1-\Lambda_2,2\Lambda_0-\Lambda_2,\Lambda_0\}$.  
Applying $\phi$ to these vertices, we obtain that the vertices of $A_w$ are 
$\{-\Lambda_0+\Lambda_1+\Lambda_2, -\Lambda_0+2\Lambda_1,\Lambda_1\},$ 
from which we conclude that $A_{\mathfrak{r}(R_{1})} = A_{s_0 s_2} = \Anull +\wt_{1}$.
\end{eg}



\section{$k$-Schur functions}
\label{sec:kschur}

We define T.\ Lam's realization of L.\ Lapointe, A.\ Lascoux, and J.\ Morse's $k$-Schur functions 
in the affine Nil-Coxeter algebra, and provide two relevant theorems.

\subsection{The affine Nil-Coxeter algebra}

The affine nilCoxeter algebra~\cite{Lam} $\mathbb{A}_k$ is the algebra generated by $\{u_i\}_{i=0}^k$, with relations: 
  \begin{align*}
		u_i^2 & = 0  \textrm{ for } 0\leq i \leq k \\
		u_iu_j & = u_ju_i   \textrm{ if } i-j \neq \pm 1\\
		u_iu_{i+1}u_i & =u_{i+1}u_iu_{i+1}  \textrm{ for } 0\leq i \leq k
			\end{align*}
 where $i-j$ and $i+1$ are understood to be taken modulo $k+1$.  If $s_{i_1} \dots s_{i_m}$ is a reduced word for an element $w \in \widetilde{\A}_k$, we 
define $\uu(w) = u_{i_1} \dots u_{i_m}$. Then $U := \{ \uu(w): w \in \widetilde{\A}_k\}$ is a basis of $\mathbb{A}_k$.

The images of the affine Grassmannian permutations $\{ \uu(w) : w \in \Acos \}$ 
are non-zero elements of $\mathbb{A}_k$, so that the affine nilCoxeter algebra naturally acts on 
$(k+1)$-cores (and therefore on $k$-bounded partitions): for $\core \in \Ckp$, define 
$\core \cdot u_i = \core \cup \cella_i(\core)$ if $\core$ has at least one addable cell of content $i$, 
and $\core \cdot u_i = 0$ otherwise.



Motivated by Stanley symmetric functions, T.\ Lam defined a set of elements within $\mathbb{A}_k$ 
that generate a subalgebra isomorphic to a natural subring of symmetric functions~\cite{Lam}.  
We require the following definitions, mimicking the construction of the usual Stanley symmetry 
functions.  An element $u = u_{i_1} u_{i_2} \cdots u_{i_m} \in U$ is said to be \deft{cyclically 
increasing} if each of $i_1, i_2, \ldots, i_m$ are distinct, and whenever $j = i_s$ and $j+1=i_t$ 
then $s<t$ ($j+1$ is taken modulo $k+1$). To a strict subset $D \subset \K$, we let $u_D$ denote 
the unique element of $U$ which is cyclically increasing and is a product of the generators $u_m$ for $m\in D$. 

\begin{defn}[\cite{Lam}]
For $1 \leq i \leq k$, define the element $\bfh_i := \sum_{|D| = i} u_D \in \mathbb{A}_k$.
\end{defn}

\begin{thm}[Corollary 14~\cite{Lam}]
The elements $\{\bfh_i\}_{i=1}^k$ generate a subalgebra isomorphic to the ring
generated by the first $k$ complete homogeneous symmetric functions $h_1, h_2, \ldots, h_k$. 
The isomorphism is defined by identifying $\bfh_i$ and $h_i$.
\end{thm}

\subsection{$k$-Schur functions}

The $k$-Schur functions were first introduced by Lapointe, Lascoux and Morse \cite{LLM}, 
who were motivated by the study of positivity Macdonald polynomials.  
They have since appeared in other contexts (see, in particular, \cite{Lam1, Lam2, LS, LM3}).  
We use T.\ Lam's definition of the $k$-Schur functions as elements of $\mathbb{A}_k$.

\begin{defn}[Definition 6.5~\cite{Lam2}]
\label{def:kschur}
Let $s_\emptyset^{(k)} = 1$ and let $\bndd$ be a $k$-bounded partition. 
We inductively construct elements $s_\bndd^{(k)}$ of the subring generated by the 
$\bfh_i$ by defining $s_\bndd^{(k)}$ to be the unique element satisfying the following ($k$-Pieri) rule:
\[ \bfh_i s_\bndd^{(k)} = \sum_\tau s_\tau^{(k)}; \hspace{.5in} .\]
where $\tau = \bndd \cdot \uu(y)$ for some cyclically increasing word $y$ of length $i$.
\end{defn}


There are two problems involving $k$-Schur functions that naturally arise:

\begin{enumerate}
\item Identify the structure coefficients 
$s_{\nu}^{(k)} s_{\mu}^{(k)} = \sum_\tau c_{\nu \mu}^{\tau(k)} s_{\tau}^{(k)}$, and
\item Understand the expansion $s_\nu^{(k)} = \sum_w a_{\nu w} \uu(w)$  in $\mathbb{A}_k$ 
(the $a_{\nu w}$ are called the $k$-Littlewood-Richardson coefficients).  
\end{enumerate}
	
It turns out that these two problems are related by the equality of coefficients~\cite[Proposition 42]{Lam} 
\[ c_{\nu \mu}^{\tau(k)} = a_{\nu \mathfrak{r}(\tau) \mathfrak{r}(\mu)^{-1}}.\]  
For both problems, several interesting special cases have been solved.  
We present two cases here that will play a role in Section~\ref{sec:gen}.




The following theorem of L.\ Lapointe and J.\ Morse explicitly identifies the multiplication 
rule when one of the functions is indexed by a rectangle $R_i$.  For two partitions 
$\bndd_1, \bndd_2$, we let $\bndd_1 \cup \bndd_2$ denote the partition obtained by 
combining the parts of $\bndd_1$ and $\bndd_2$ and placing them into non-increasing order.  

\begin{thm}[Theorem 40~\cite{LM3}]
\label{thm:LM}
For a rectangle $R_i$ and a $k$-bounded partition $\bndd$, \[s_\bndd^{(k)} s_{R_i}^{(k)} = s_{\bndd \cup R_i} ^{(k)}.\] 
\end{thm}


In \cite{BBTZ}, the authors explicitly described the expansion of the rectangular 
$k$-Schur functions $s_{R_i}$ as an element in $\mathbb{A}$.  For $\wt$ a weight, 
we say $v \in \widetilde{\A}_k$ is a \deft{pseudo-translation} of $A_w$ in the 
direction $\wt$ if $A_{wv}  =  A_w + \wt$.  We let $\tr_\wt$ 
denote the pseudo-translation of $\Anull$ in the direction $\wt$, so that $\tr_\wt \Anull = \Anull + \wt$.  Pseudo-translations 
have since been realized by T.\ Lam and M.\ Shimozono as being translations of the extended 
affine Weyl group~\cite{LS2}.


\begin{thm}[Theorem 4.12~\cite{BBTZ}]\label{thm:bbtz} Inside $\mathbb{A}_k$,
\[ s_{R_i}^{(k)} = \sum_{\Lambda \in \A_k \wt_i} \uu({\tr_\wt}).\]
where $\A_k \wt_i := \{w \wt_i : w \in \A_k\}$.
\end{thm}

Observe that this theorem is an algebraic analogue of the bijection between $R_i$ and 
$\wt_i$ in Proposition~\ref{prop:rectangles_and_fund_weights}, since it states that the 
$k$-Schur functions indexed by $R_i$ correspond to a sum over $\A_k$-orbits of $\Lambda_i$.


\section{Generalized Suter symmetry}
\label{sec:gen}

From our discussion in Section~\ref{sec:suter_symmetry}, R.\ Suter's set of partitions 
$Y^k$ is understood geometrically as the 2-fold dilation of the fundamental alcove.  
It makes sense to generalize this by instead considering an $m$-fold dilation of $\Anull$.  
Special cases of a translate of this dilation arise in the context of Catalan combinatorics. 

\begin{defn}
\label{def:ymk}
Let $Y_m^k := \{ \bndd \in \Pk : A_\bndd \in m\Anull\}.$
\end{defn}

By construction, the geometric symmetry on $m\Anull$ induces a symmetry on $Y_m^k$, 
generalizing R.\ Suter's construction when $m=2$.  This geometric symmetry is illustrated in Figure~\ref{fig:3dkeq}.

\begin{figure}[htbp]
\begin{center}
\includegraphics[width=2.5in]{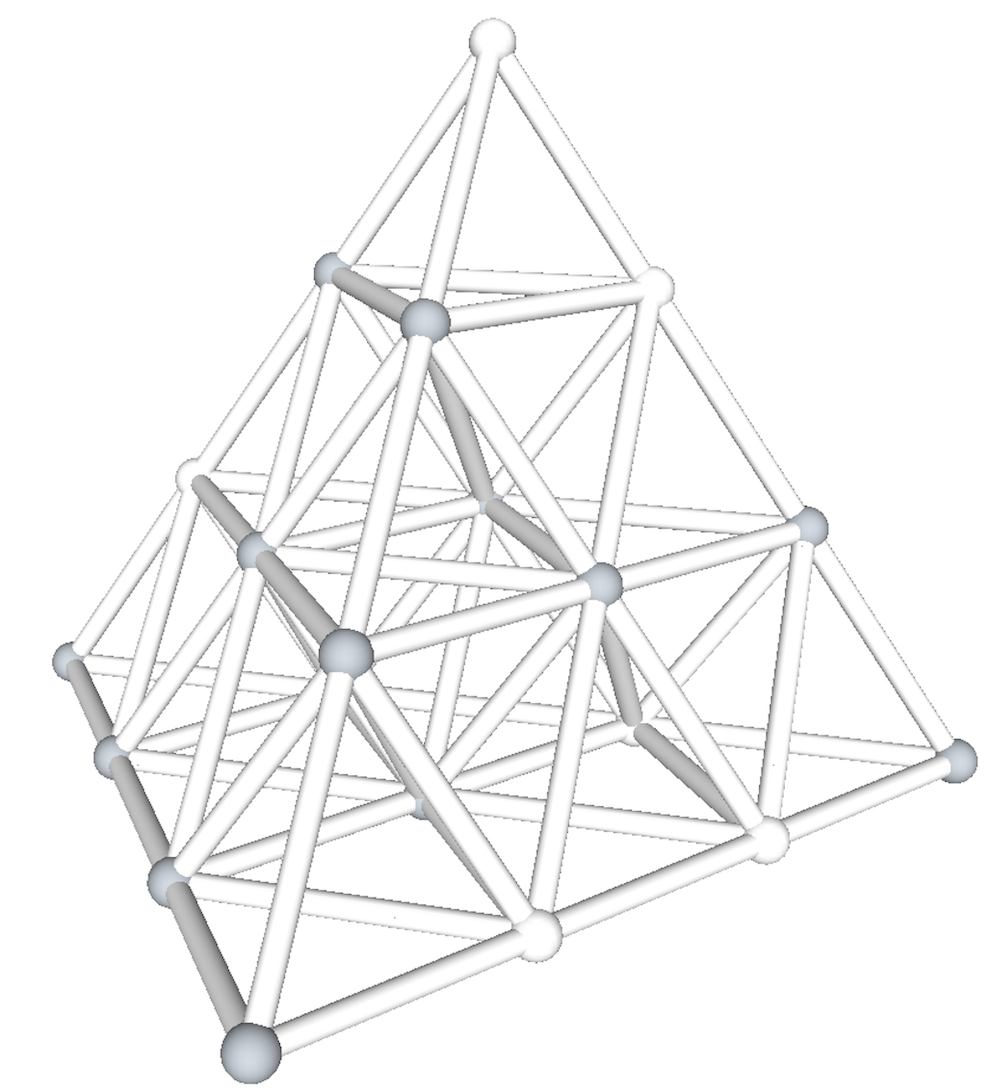}
\end{center}
\caption{The dilation of the fundamental alcove of $\widetilde{\A}_3$ by a factor of $3$
is referred to as $3 \Anull$. Each alcove is in bijection with one of the $4$-cores drawn in Figure~\ref{eg:Y33}.}
\label{fig:3dkeq}
\end{figure}

The problem, then, is to give an intrinsic characterization of the $k$-bounded partitions 
in $Y_m^k$, similar to the description of $Y^k$ in Definition~\ref{defn:yk}.  Exactly as in the case of $Y^k$---since the bijections between $k$-bounded partitions, 
$(k+1)$-cores, and alcoves are order preserving---$Y_m^k$ is an order ideal in the lattice 
of $k$-bounded partitions and it is enough to characterize the maximal elements.  
The maximal alcoves are those with a facet on the hyperplane $H_{\alpha_0,m}$, 
and are again in bijection with the weight that is their unique vertex not on $H_{\alpha_0,m}$.  
These weights are all dominant weights that lie on $H_{\alpha_0,m-1}$ and are simply all 
sums of any $m-1$ fundamental weights $\sum_{j=1}^{m-1} \wt_{i_j}$.

\begin{lemma} \label{lem:action}
For a weight $\eta = \sum_{i=1}^k \eta_i \Lambda_i$ and $|\eta| = \sum_{i=1}^k \eta_i$, 
let $\tr_{\eta}$ be the pseudo-translation $\tr_{\eta}\Anull=\Anull+\eta$.
Then $A_{\phi^p(\tr_{\eta})} = \Anull + c^{p} \eta$
where $c = s_1 s_2 \cdots s_k$.  
In particular, \[\{\phi^{p}(\tr_{\eta}) : 0 \leq p \leq k\} \subseteq \{\tr_{v \eta} : v \in \A_k\}.\]
\label{lem:contains_cycles}
\end{lemma}



\begin{proof}
Fix the long cycle $c=s_1s_2\cdots s_k.$  We claim that $\tr_{c^p \eta} 
= \phi^{p}(\tr_{\eta})$ for $0\leq p \leq k.$  This is trivially true for $p=0$.
For $p>0$, a computation by induction using
the action of $s_i$ on weights given in equation \eqref{eq:siactionlambdaj} shows that 
$c^p \wt_i = \wt_0-\wt_p+\wt_{i+p}.$  From this we conclude that
\[ c^p \eta = \wt_0-|\eta| \wt_p + \sum_{i=1}^k \eta_i \wt_{i+p}  \]
where the indices of the weights are all taken mod $k+1$.

It remains to show that 
$\phi^{p}(\tr_{\eta})$ is a pseudo-translation of $\Anull$ in the direction $c^p \eta$, 
which we will observe by computing its action on each of the fundamental weights.  
The vertices of $A_{\tr_{\eta}} = \Anull + \eta$ are the 
weights $\{ \wt_j - |\eta| \wt_0 + \eta : 0 \leq j \leq k \}.$  
The vertices of $A_{\phi^{p}(\tr_{\eta})}$ follow from applying $\phi^{p}$ to these:
\[\left\{  \wt_{j+p} - |\eta| \Lambda_p + \sum_{i=1}^k \eta_i \wt_{i+p} : 0 \leq j \leq k \right\}.\]
The bounding vertices of $A_{\phi^{p}(\tr_{\eta})}$ are therefore
$\wt_j - \wt_0 + c^p \eta$ for $0 \leq j \leq k$ and hence we conclude that 
$\phi^p( A_{\tr_{\eta}} ) = 
A_{\phi^p(\tr_{\eta})} = \phi^{p}(\tr_{\eta}) \Anull=\Anull+c^p \eta$, so that $\phi^p(\tr_{\eta}) =\tr_{c^p \eta}$.
\end{proof}

We visualize the action of $\phi$ on the alcoves of $2\Anull$ for $\widetilde{A}_2$ in the following example.

\begin{eg}
In type $\widetilde{\A}_2$, we will compute the conclusion of Lemma~\ref{lem:action} for $\eta = \Lambda_1$ and $p=1,2$.  
On the one hand, $\tr_{\wt_1}=s_0 s_2$ and, from Lemma~\ref{lem:pseudo_1}, we know that 
$A_{\tr_{\Lambda_1}}=\Anull+\Lambda_1$.  We can also compute that $\phi(z_{\Lambda_1})=s_1 s_0,$ 
$\phi^2(z_{\Lambda_1})=s_2 s_1$.  We observe from Figure~\ref{eg:walk} that
\[A_{\phi(z_{\Lambda_1})} = \Anull-\Lambda_1+\Lambda_2 
\text{ and } A_{\phi^2(z_{\Lambda_1})} = \Anull-\Lambda_2.\]
	
On the other hand, we compute that 
\[c \Lambda_1 = (s_1 s_2) \Lambda_1 = s_1 \Lambda_1 = \Lambda_0-\Lambda_1+\Lambda_2,\]
so that \[c^2 \Lambda_1 = c (\Lambda_0-\Lambda_1+\Lambda_2) = 2\Lambda_0-\Lambda_2.\]

Comparing the two sides, we conclude that $A_{\phi^p(z_{\Lambda_1})}=\Anull+c^{p}(\Lambda_1)$ for $p=1,2$.

	



\begin{center}
\raisebox{-.7in}{\includegraphics[width=1.5in]{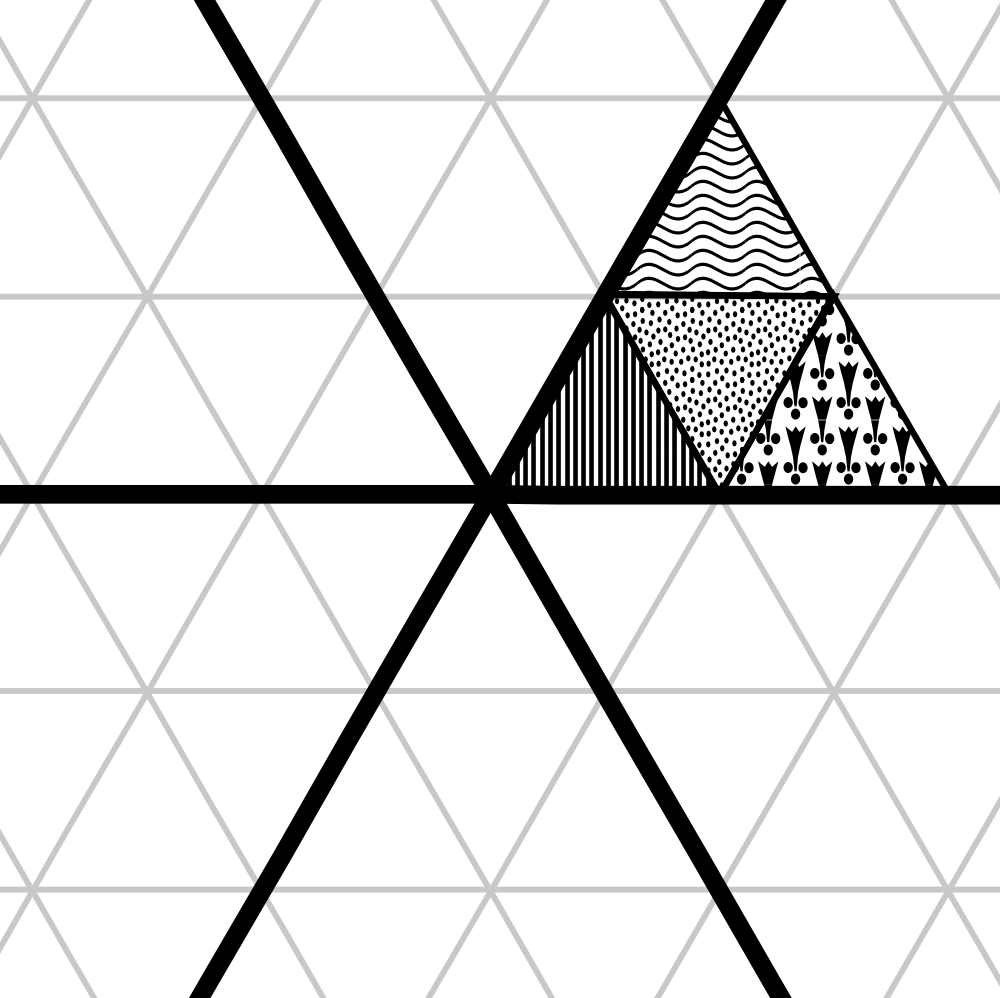}}
$\substack{\longrightarrow\\\phi}$
\raisebox{-.7in}{\includegraphics[width=1.5in]{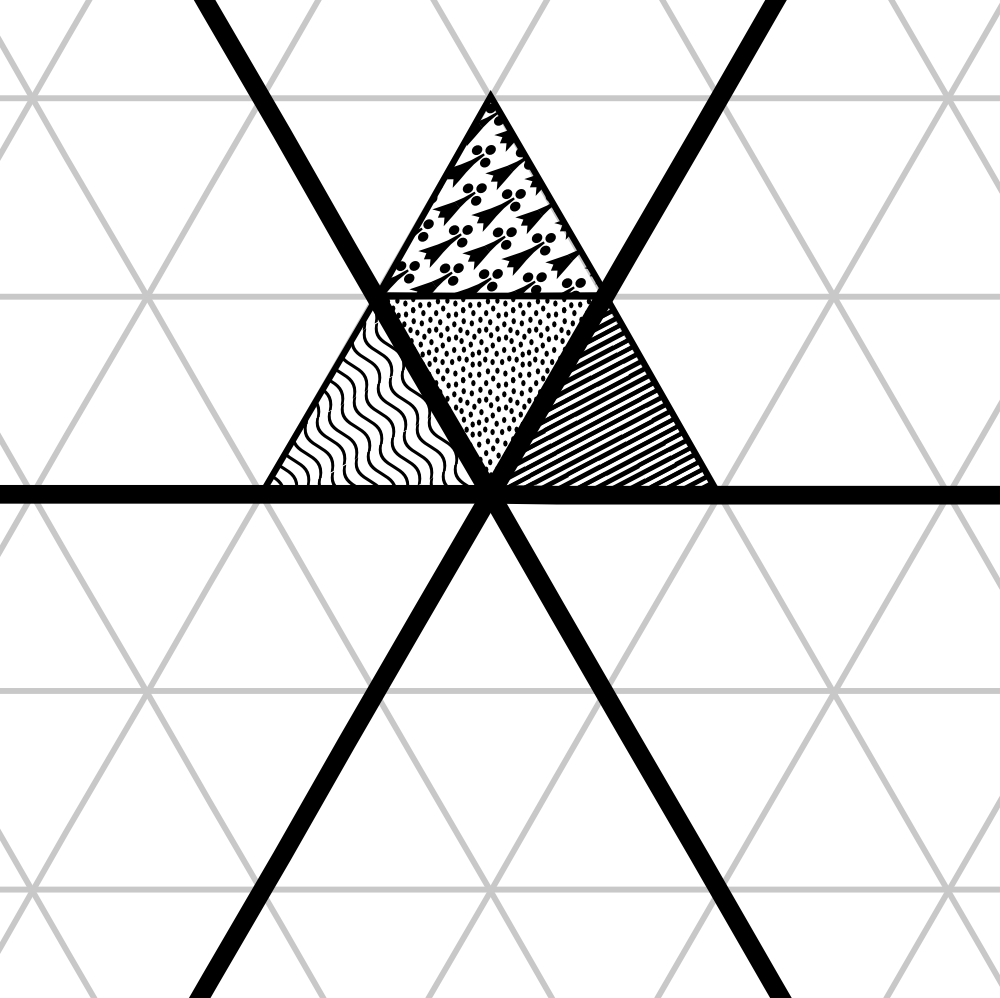}}\hskip .2in\includegraphics[width=.5in]{direction2.pdf}
\end{center}
\end{eg}






We now draw on the established analogy to $k$-Schur functions.  
Combining Theorems~\ref{thm:bbtz} and~\ref{thm:LM}, we calculate:

\[
s^{(k)}_{\cup_{j=1}^{m-1} R_{i_j}} = \prod_{j=1}^{m-1} s^{(k)}_{R_{i_j}} 
= \prod_{j=1}^{m-1} \sum_{\Lambda \in \A_k \wt_i} \uu({\tr_\wt}) = \uu(\mathfrak{r}(\cup_{j=1}^{m-1} R_{i_j}))+\ldots.
\]

The leftmost expression is indexed by the $k$-bounded partition $\cup_{j=1}^{m-1} R_{i_j}$.  By the bijection between affine Grassmannian elements and bounded partitions, a reduced word for the element $\uu(\mathfrak{r}(\cup_{j=1}^{m-1} R_{i_j}))$ may be obtained by filling the shape $\cup_{j=1}^{m-1} R_{i_j}$. This filling will contain certain shifts $\phi^d(\mathfrak{r}(R_{i_j}))$, so that the desired element appears on the right-hand side by Lemma~\ref{lem:contains_cycles}.

Just as Theorem~\ref{thm:bbtz} was the algebraic analogue 
of Proposition~\ref{prop:rectangles_and_fund_weights}, since we know that the weights 
$\sum_{j=1}^{m-1} \wt_{i_j}$ are in bijection with the maximal alcoves in $m\Anull$, 
this immediately suggests our main theorem, a characterization of the maximal elements of $Y_m^k$.

\begin{thm}
\label{thm:mainthm}
The maximal $k$-bounded partitions in $Y_m^k$ are the $R_{i_1,\ldots,i_{m-1}}:=\cup_{j=1}^{m-1} R_{i_j},$ so that \[Y_m^k = \{ \bndd \in \Pk : \bndd \subseteq R_{i_1,\ldots,i_j} \text{  for some } 1\leq i_1, \dots, i_{m-1} \leq k\}\].
\end{thm}

The proof of this theorem will be given in Section~\ref{sec:proofs}.  
By the geometric description of $Y^k_m$ in Definition~\ref{def:ymk}, the poset $Y^k_m$ 
has a $(k+1)$-fold cyclic symmetry.  This symmetry is explored further in Section~\ref{sec:app_sym}.  
Examples of these posets are given in Figures~\ref{eg:Y24} and~\ref{eg:Y33}. 

\begin{figure}
\begin{center}
\includegraphics[width=3in]{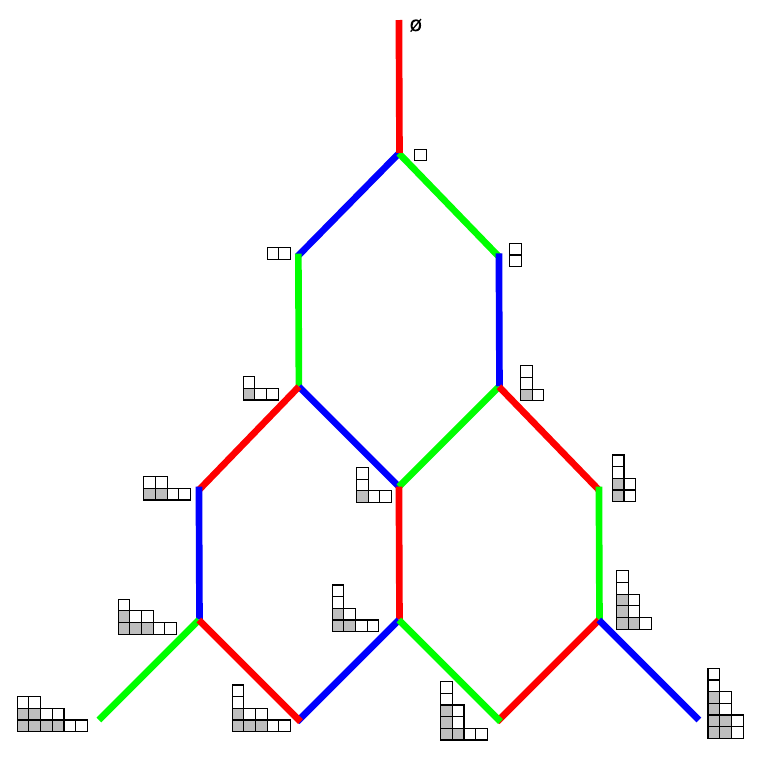}
\end{center}
\caption{The poset $Y^2_4$ labeled by $3$-cores.  The cores in this diagram are in bijection with the
$4$-fold dilation of the fundamental alcove of the hyperplane arrangement in Figure \ref{eg:walk}
or in Figure \ref{notfigure1}.
The cells in the partitions with hook-length 
greater than 3 are shaded.  The edge colors correspond to the content of the cells being added: 
red is content $0 \mod 3$, blue $1 \mod 3$, and green $2 \mod 3$.  
This poset exhibits a dihedral symmetry of order $3$.}
\label{eg:Y24}
\end{figure}

\begin{figure}
\begin{center}
\includegraphics[scale=.7]{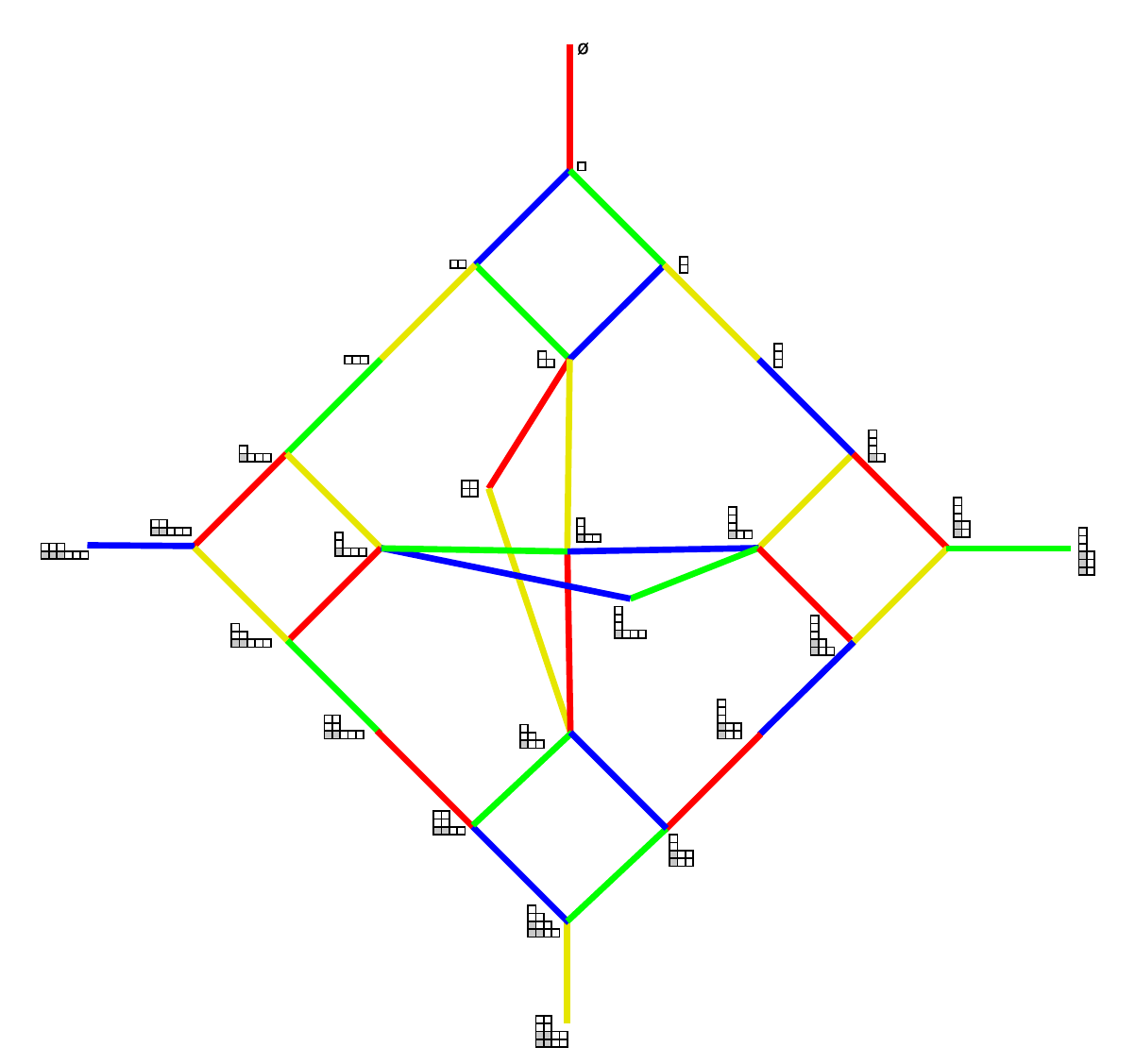}
\end{center}
\caption{The poset $Y^3_3$ labeled by $4$-cores.  The cells in the partitions with hook-length greater 
than 4 are shaded.  The edge colors correspond to the content of the cells being added: red is 
content $0 \mod 4$, blue $1 \mod 4$, yellow $2 \mod 4$, and green $3 \mod 4$.  
This poset exhibits a dihedral symmetry of order $4$.}
\label{eg:Y33}
\end{figure}



\section{Alcoves with a facet on the hyperplane $H_{\alpha_0,m}$}
\label{sec:proofs}

Recall that for two partitions $\bndd_1, \bndd_2$, we let $\bndd_1 \cup \bndd_2$ 
denote the partition obtained by combining the parts of $\bndd_1$ and $\bndd_2$ and 
placing them into non-increasing order.  

\begin{defn}
Let $R_{i_1,\ldots,i_{m-1}}$ be the $k$-bounded partition $R_{i_1} \cup \dots \cup R_{i_{m-1}}$ for $1 \leq i_j \leq k.$
\end{defn}

\begin{lemma}
\label{lem:numrects}
There are $\binom{m-1+k-1}{k-1}$ distinct $k$-bounded partitions $R_{i_1,\ldots,i_{m-1}}$.
\end{lemma}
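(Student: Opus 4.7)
The plan is to reduce the counting question to a classical multiset count by showing that the map from multisets $\{i_1,\ldots,i_{m-1}\}\subset\{1,\ldots,k\}$ (with repetition) to partitions $R_{i_1}\cup\cdots\cup R_{i_{m-1}}$ is a bijection onto its image, and then apply stars-and-bars.

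First I would note that since $\cup$ is commutative and associative on partitions (it merges and re-sorts the parts), the partition $R_{i_1}\cup\cdots\cup R_{i_{m-1}}$ depends only on the multiset $\{\!\{i_1,\ldots,i_{m-1}\}\!\}$ of indices, not on the order. Hence the number of distinct partitions is at most the number of size-$(m-1)$ multisets drawn from $\{1,\ldots,k\}$, which by the standard stars-and-bars identity equals $\binom{m-1+k-1}{k-1}$.

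For the reverse inequality, I need to recover the multiset of indices from the resulting partition. Here is the key observation: the rectangle $R_j=(j^{k+1-j})$ contributes exactly $k+1-j$ parts of size $j$ and no parts of any other size. Therefore, if $c_j$ denotes the multiplicity of the index $j$ in the chosen multiset, then the partition $R_{i_1}\cup\cdots\cup R_{i_{m-1}}$ has exactly $c_j(k+1-j)$ parts equal to $j$ for each $j\in\{1,\ldots,k\}$. Since $k+1-j\neq 0$ whenever $j\leq k$, we can solve for $c_j$ as the number of parts of size $j$ divided by $k+1-j$. This recovers the multiset $\{\!\{i_1,\ldots,i_{m-1}\}\!\}$ uniquely from the partition, so the map is injective.

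Combining the two directions gives the equality, and the count $\binom{m-1+k-1}{k-1}$ follows. I do not expect any real obstacle here; the only subtlety to state cleanly is the divisibility observation that $k+1-j>0$ on the relevant range of $j$, which is why no information is lost in going from the multiset of indices to the part-multiplicities of the partition.
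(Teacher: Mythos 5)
Your proof is correct and follows essentially the same route as the paper, which also reduces the count to choosing a multiset of $m-1$ indices from $\{1,\dots,k\}$ and applies the standard $\binom{m-1+k-1}{k-1}$ formula. The only difference is that you explicitly verify injectivity (recovering each multiplicity $c_j$ from the number of parts of size $j$ divided by $k+1-j$), a step the paper leaves implicit.
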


\begin{proof}
The number of $R_{i_1,\ldots,i_{m-1}}$ is the number of ways to pick $m-1$ objects from 
a set of $k$ objects with repetition. 
\end{proof}


\begin{thm}
Let $i_1 \geq i_2 \geq \cdots, \geq i_{m-1}$.

\begin{itemize}
\item The $(k+1)$-core $\mathfrak{c}(R_{i_1,\ldots,i_{m-1}})$ is the partition 
\[ \lambda:=\left( (\sum_{j=1}^{m-1} i_j)^{k+1-i_1}, (\sum_{j=2}^{m-1} i_j)^{k+1-i_2}, 
\ldots, (i_{m-1})^{k+1-i_{m-1}}\right) .\]

\item The affine Grassmannian permutation $\mathfrak{r}(R_{i_1,\ldots,i_{m-1}})$ 
is the pseudo-translation $\tr_{\sum_{j=1}^{m-1} \wt_{i_j}}$.
\end{itemize}
\label{thm:characterizations}
\end{thm}

\begin{proof}
For the first statement, it is easy to check that $\lambda$ is the $(k+1)$-core resulting 
from $R_{i_1,\ldots,i_{m-1}}$ by applying the bijection $\mathfrak{c}$ using the combinatorial
description of this map found in \cite{LM2} or \cite{LLMSSZ}.  We provide a proof by example for the sequence
of rectangles $R_{4,4,3,1}$ with $k=4$ in Example \ref{eg:pushout}.

To prove the second statement, recall that the element $\mathfrak{r}(R_{i_1,\ldots,i_{m-1}})$
is a reading word of the filling of $R_{i_1,\ldots,i_{m-1}}$ described in Theorem \ref{thm:bounded_to_grassmannian}.
We factor the reduced word  
into the piece in the rectangle $R_{i_1,\ldots,i_{m-2}}$ and the piece in $R_{i_{m-1}}$.  Let $d=\sum_{j=1}^{m-2} i_j$, so that
\[
\mathfrak{r}(R_{i_1,\ldots,i_{m-1}}) 
= \mathfrak{r}(R_{i_1,\ldots,i_{m-2}}) \phi^{d}(\mathfrak{r}(R_{i_{m-1}})) 
\]

By induction, we assume that 
$\mathfrak{r}(R_{i_1,\ldots,i_{m-2}}) \Anull = A_{R_{i_1,\ldots,i_{m-2}}} = \Anull + \sum_{j=1}^{m-2} \wt_{i_j}$.
Proposition~\ref{prop:oneone} says that the label of the translation of the origin in 
$\Anull + \sum_{j=1}^{m-2} \wt_{i_j}$ is
$\lbl( \sum_{j=1}^{m-2} \wt_{i_j} ) = d$, while the labels of the translate of the fundamental weights 
$\wt_i$ are $\lbl( \wt_i + \sum_{j=1}^{m-2} \wt_{i_j} ) = i+d$.  Thus, reflecting in the hyperplane 
opposite from $\wt_i$ in $\Anull$ and then translating is equivalent to translating and then reflecting in the hyperplane 
opposite from 
$\phi^d(\wt_i)$: 
\[A_{s_i}+\sum_{j=1}^{m-2} \wt_{i_j}= A_{\mathfrak{r}(R_{i_1,\ldots,i_{m-2}}) \phi^d(s_i)}\]
and, more generally, for any $u \in \widetilde{\A}_k$ we have 
\[A_{u}+\sum_{j=1}^{m-2} \wt_{i_j}= A_{\mathfrak{r}(R_{i_1,\ldots,i_{m-2}}) \phi^d(u)} .\]

By Lemma~\ref{lem:pseudo_1} we know that $\mathfrak{r}(R_{i_{m-1}}) \Anull = \Anull + \wt_{i_{m-1}}$, so that 
\[\mathfrak{r}(R_{i_1,\ldots,i_{m-1}}) \Anull  
= A_{\mathfrak{r}(R_{i_1,\ldots,i_{m-2}}) \phi^{d}(\mathfrak{r}(R_{i_{m-1}}))}  = 
A_{\mathfrak{r}(R_{i_{m-1}})} +\sum_{j=1}^{m-2} \wt_{i_j} = \Anull + \sum_{j=1}^{m-1} \wt_{i_j},\] 
so that $\mathfrak{r}(R_{i_1,\ldots,i_{m-1}}) = \tr_{\sum_{j=1}^{m-1} \wt_{i_j}}$.



\end{proof}

\begin{prop}
Let $w = \mathfrak{r}(R_{i_1,\ldots,i_{m-1}})$ and let $A_w$ have vertices $v_0,v_1,\ldots,v_k$ 
with $L(v_j)=j$.  Then some vertex $v_i$ of $A_{w}$ lies on the hyperplane $H_{\alpha_0,m-1}$ 
and the facet defined by the vertices $\{v_j : j \neq i \}$ lies on the hyperplane $H_{\alpha_0,m}$.  
Furthermore, $s_i$ is the unique right ascent of $w$ such that $w s_i$ is Grassmannian.
\label{prop:one_ascent}
\end{prop}


\begin{proof}
The facet of $\Anull$ defined by $\{\Lambda_i : 1\leq i \leq k\}$ lies on the hyperplane 
$H_{\alpha_0,1}$.  Now $A_w = \Anull+\sum_{j=1}^{m-1} \wt_{i_j}$, and the vertex $v_i$ which 
is a translate of the origin has coordinates $\sum_{j=1}^{m-1} \wt_{i_j}$.  
Since \[ \left\langle \sum_{j=1}^{m-1} \wt_{i_j}, \alpha_0 \right\rangle = m-1, \]
we see that $v_i$ lies on $H_{\alpha_0,m-1}$.  The vertices of $A_w$ which are not 
translates of the origin will have weight  $v_d = \wt_d + \sum_{j=1}^{m-1} \wt_{i_j}$ for $1 \leq d \leq k$.  
They therefore satisfy
\[\left< v_d, \alpha_0 \right> = 1 + \sum_{j=1}^{m-1} \langle \wt_{i_j},\alpha_0 \rangle = m, \] 
and so lie on the wall $H_{\alpha_0,m}$.

To show that $s_i$ is the unique ascent of $w$ such that $w s_i$ is Grassmannian, we use the 
corresponding $(k+1)$-core.  We claim that the only addable residue is 
$i = \sum_{j=1}^{m-1} i_j \mod k+1$. From Theorem~\ref{thm:characterizations}, the core 
$\mathfrak{r}(R_{i_1,\ldots,i_{m-1}})$ consists of the rectangles $R_{i_j}$ arranged in skew fashion.  
Cells which are on the opposite sides of a rectangle $R_{i_j}$ will have have the same residue 
because they are separated by a hook of length $k+1$. Therefore only one residue is addable.  
Finally, note that the length of the first row of $\mathfrak{c}(R_{i_1,\ldots,i_{m-1}})$ is $i=\sum_{j=1}^{m-1} i_j$, so that $i$ is indeed the addable residue.

\end{proof}

	

\begin{lemma}
Let $u$ be an affine Grassmannian permutation, and let $\wt = \sum_{j=1}^{m-1} \wt_{i_j}$ 
be a vertex of the alcove $A_u$.  If $w=\mathfrak{r}(R_{i_1, \ldots, i_{m-1}})$, then $u \leq w$.
\label{lem:less_than_my_weight}
\end{lemma}

\begin{proof}
	
	
Let ${\mathcal C}$ be the set of hyperplanes $H_{\alpha,p}$ 
that contain the weight $\Lambda$ and that have $p>0$.  Since $A_w$ has a unique 
(Grassmannian) ascent in the facet not containing $\Lambda$, a reflection across any of the hyperplanes
containing $\Lambda$ must decrease the length of $w$
and therefore the inversion set of $w$ 
contains all hyperplanes in ${\mathcal C}$.
Since $A_u$ has $\Lambda$ as a vertex and since $A_u$ is Grassmannian, any reflection of $A_u$
across a hyperplane in ${\mathcal C}$ will remain Grassmannian and some subset of these reflections will send
$A_u$ to $A_w$.  
Therefore $\inv(w) = \inv(u) \cup {\mathcal C}$ and, in particular, the
inversion set of $u$ is contained in the inversion set of $w$, so that $u\leq w$ in weak order.
\end{proof}

We now prove Theorem~\ref{thm:mainthm}, characterizing the maximal $k$ bounded partitions in $Y_m^k$.

\begin{proof} 

The proof is by induction on $m$. When $m = 1$, the statement is trivial.

Now fix $m>1$. If $A_\bndd$ is in $(m-1)\Anull$, then the statement follows by induction, since
$\bndd \subseteq R_{i_1,\ldots,i_{m-2}} \subseteq R_{i_1,\ldots,i_{m-2},i_{m-1}}$ for any 
other $1 \leq i_{m-1} \leq k$.  We may therefore assume that $A_\bndd$ is between $H_{\alpha_0,m-1}$ 
and $H_{\alpha_0,m}$ and so has at least one vertex on $H_{\alpha_0,m-1}$.  The result now 
follows from Lemma~\ref{lem:less_than_my_weight}.
\end{proof}

Theorem~\ref{thm:mainthm} validates our explicit description of $Y_m^k$.  In particular, 
we can translate the geometric properties of $m\Anull$ to combinatorial statements on $Y_m^k.$


\begin{prop} \label{prop:volume}
The number of partitions in $Y_m^k$ is $m^k$.
\end{prop}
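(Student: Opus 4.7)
The plan is to translate the statement into a geometric count, using the identification (established in Theorem~\ref{mainthm} via Corollary~\ref{cor:all_below} and Lemma~\ref{lemma:vertexbijection}) between elements of $Y_m^k$ and alcoves in the dominant chamber $C$ lying below the hyperplane $H_{\phi,m}$. Once we know that counting partitions in $Y_m^k$ is the same as counting alcoves in this bounded region, the problem becomes a purely Euclidean volume computation.

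First I would describe the region $D_m := C \cap \{x : \langle x,\phi\rangle \le m\}$ explicitly: it is the simplex cut out by the walls $H_{\alpha_i,0}$ for $1 \le i \le k$ together with $H_{\phi,m}$. The fundamental alcove $A_\emptyset$ is cut out by the same walls except with $H_{\phi,1}$ in place of $H_{\phi,m}$, so $D_m$ is the image of $A_\emptyset$ under the linear dilation $x \mapsto m x$ centered at the origin (which is the vertex common to all the walls $H_{\alpha_i,0}$). In particular, $D_m$ is a simplex similar to $A_\emptyset$ with all edge lengths scaled by a factor of $m$.

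Next I would invoke the standard fact about the affine Weyl group action: every alcove $A_w$ is an isometric image of $A_\emptyset$, so all alcoves share a common $k$-dimensional Euclidean volume $V = \mathrm{vol}(A_\emptyset)$, and the affine hyperplane arrangement tiles $V$ by these alcoves. Since $D_m$ is bounded by hyperplanes of the arrangement, it is tiled by finitely many alcoves, and by dilation $\mathrm{vol}(D_m) = m^k V$. Dividing, the number of alcoves contained in $D_m$ is exactly $m^k$.

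Finally, combining this count with the bijection from Theorem~\ref{mainthm} between $Y_m^k$ and the alcoves in $D_m$ gives $|Y_m^k| = m^k$. The main (and only) subtle point is to justify that the tiling of $D_m$ by alcoves is exact, with no alcove straddling a boundary wall; this follows because each bounding wall of $D_m$ is itself a hyperplane of the affine arrangement, hence a union of facets of alcoves, so the alcoves that meet $D_m$ are precisely those contained in $D_m$. No other subtlety arises, and I would not expect any difficulty beyond stating this volume argument cleanly.
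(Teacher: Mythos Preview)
Your proposal is correct and follows essentially the same approach as the paper: identify $Y_m^k$ with the alcoves in the $m$-fold dilation of the fundamental alcove (via Corollary~\ref{cor:all_below}/Theorem~\ref{mainthm}), then use the $k$-dimensional volume scaling to conclude there are $m^k$ alcoves. Your treatment is in fact more careful than the paper's, which states the volume argument in a single sentence without discussing the tiling subtlety you address.
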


\begin{proof}
Since $m\Anull$ is an $m$-fold dilation in $k$-dimensional space, 
$\mathrm{vol}(m\Anull) = m^k \mathrm{vol}(\Anull).$  Because $Y_m^k$ 
is in bijection with the alcoves in $m\Anull$, there must be $m^k$ of them.
\end{proof}

\section{Applications}
\label{sec:applications}

We study applications of our results.

\subsection{From alcoves to $k$-bounded partitions or $(k+1)$-cores}
\label{sec:app_alcoves_to_kbounded}


Recall  that  the  bijections  for  passing  between  $k$-bounded  partitions  or  $(k+1)$-cores  
$\core$  and  affine  Grassmannian  permutations  $w$  was  a  several  step  process  that  all  
relied  on  producing  \textit{reduced  words}  for  $w$.    In  this  section,  we  give  a  
direct  bijection  for  going  from  the  vertices  of  an  alcove  to  its  corresponding  
$k$-bounded  partition  or  $(k+1)$-core.

\begin{thm}  Let $A_w$ be an alcove with vertices $v_d = \sum_{j=1}^k \wt_{i_j}$ for $0 \leq d \leq k$, 
and let $w_d = \mathfrak{r}(R_{i_1,\ldots,i_{k}})$.  The element $w$ is the weak order greatest 
common divisor of the $w_d$.
\label{thm:gcdw}
\end{thm}

\begin{proof} By Lemma~\ref{lem:less_than_my_weight}, we know that $w \leq w_d$, so that 
$w \leq \wedge_d w_d$ and $w$ is a lower bound for the $w_d$.  By Proposition~\ref{prop:one_ascent}, 
we see that for each Grassmanian ascent $s_j$ of $w$, there is some $w_d$ with $w s_j \not \leq w_d$, 
so that $w s_j \not \leq \wedge_d w_d$.  Therefore, each cover of $w$ is not a lower bound for 
$\wedge_d w_d,$ from which we conclude that $w = \wedge_d w_d$.
\end{proof}

We can therefore compute $w$ as the intersection of the inversion sets of $w_0,\ldots,w_k.$
Theorem~\ref{thm:gcdw} can be modified to allow us to also directly compute the corresponding
$(k+1)$-core $\mathfrak{c}(w)$ or the $k$-bounded partition
$\mathfrak{p}(w)$ from the coordinates of $A_w$.  The calculation involves taking intersections
of partitions where if $\ell = \text{min}(\ell(\mu), \ell(\gamma))$ then
$\mu \cap \gamma = (\text{min}(\mu_1,\gamma_1), \text{min}(\mu_2,\gamma_2), \ldots, \text{min}(\mu_\ell,\gamma_\ell))$.

\begin{cor} \label{cor:intersect}
	Let $A_w$ be an alcove with vertices $v_d = \sum_{j=1}^k \wt_{i_j}$ for $0 \leq d \leq k$, 
	let $\bndd_d = R_{i_1,\ldots,i_{k}}$, $\core_d = \mathfrak{c}(\bndd_d)$, and let 
	$w_d = \mathfrak{r}(R_{i_1,\ldots,i_{k}})$.
\begin{itemize}
\item If $\bndd$ is the $k$-bounded partition with cells that are in the intersection of all 
of the $\bndd_d$, then  $\bndd = \mathfrak{p}(w)$.

\item If $\core$ is the $(k+1)$-core with cells which are in the intersection of all of the 
$\core_d$, then $\core = \mathfrak{c}(w)$.
\end{itemize}
\end{cor}

\begin{proof}
We prove the two items simultaneously.  By Lemma~\ref{lem:less_than_my_weight}, $w$ is less than 
each $w_d$ in weak order, for $0 \leq d \leq k$.  Therefore, there is a path from $w$ to each 
$w_d$---so that $\mathfrak{p}(w)$ and $\mathfrak{c}(w)$ are contained within the intersection 
of the diagrams of $\bndd_d$ and $\core_d$, respectively.  If there is a cell outside the 
diagram of $\mathfrak{p}(w)$ (resp. $\mathfrak{c}(w)$) that is in the intersection of the 
$\bndd_d$ (resp. $\core_d$), then there is such a cell that is addable from $\mathfrak{p}(w)$ 
(resp. $\mathfrak{c}(w)$).   This cell corresponds to an inversion that is not in $\inv(w)$, 
but that is in each of $\inv(w_d)$, contradicting Theorem~\ref{thm:gcdw}.	
\end{proof}

\begin{eg}
Let $w = s_0 s_1 s_2 s_1 s_0$.  The alcove $A_{w}$---the alcove directly before 
the end of the red gallery in Figure~\ref{eg:walk}---has vertices
$v_0 = 2 \wt_1 + 2 \wt_2$, $v_1 = 2 \wt_1 + \wt_2$ and $v_2 = \wt_1 + 2 \wt_2$.

Then $\bndd_0 = (2,2,1,1,1,1)$, $\bndd_1 = (2,1,1,1,1)$ and $\bndd_2 = (2,2,1,1)$.  
Intersecting these, we obtain $\bndd = (2,1,1,1) = \mathfrak{p}(w)$ as we depict 
graphically in the following Ferrer's diagrams for the partitions.

\newdimen\squaresize \squaresize=8pt
\newdimen\thickness \thickness=0.4pt

\begin{equation*}
\young{\cr\cr\cr\cr&\cr&\cr} \cap \young{\cr\cr\cr\cr&\cr} \cap \young{\cr\cr&\cr&\cr} = \young{\cr\cr\cr&\cr}
\end{equation*}

Similarly, $\core_0 = (6,4,2,2,1,1)$, $\core_1 = (4,2,2,1,1)$ and $\core_2 = (5,3,1,1)$.  
Intersecting these, we obtain the $3$-core $\core = (4,2,1,1) = \mathfrak{c}(w)$.	 
The diagrams for the partitions are pictured below with
the cells with skew length greater than 2 shaded.

\newdimen\squaresize \squaresize=8pt
\newdimen\thickness \thickness=0.4pt

\begin{equation*}
\young{\cr\cr\gsqr&\cr\gsqr&\cr\gsqr&\gsqr&&\cr\gsqr&\gsqr&\gsqr&\gsqr&&\cr} 
\cap \young{\cr\cr\gsqr&\cr\gsqr&\cr\gsqr&\gsqr&&\cr} 
\cap \young{\cr\cr\gsqr&&\cr\gsqr&\gsqr&\gsqr&&\cr} 
= \young{\cr\cr\gsqr&\cr\gsqr&\gsqr&&\cr}
\end{equation*}

\end{eg}

\subsection{A cyclic action on $Y_m^k$}
\label{sec:app_sym}

Fix $\widetilde{\A}_k$.  By Theorem~\ref{thm:mainthm}, the set of $k$-bounded partitions less than the $R_{i_1,\ldots,i_{m-1}}$
inherits the cyclic action on $m\Anull$.  In this section, we give several explicit descriptions of this action, which we denote $C^k_{m}: m\Anull \rightarrow m\Anull.$

\subsubsection{Apply $\phi$ and translate.}

For $w = {\mathfrak r}(\lambda)$ with $\lambda \in Y_m^k$, let $C^k_{m}( A_w ) = A_{\phi^{-1}(w)} + (m-1) \Lambda_1$.  In fact, we may define the action of $C^k_{m}$ on any
vector $\gamma =  \sum_{i=1}^k \gamma_i \Lambda_i \in m\Anull$ as 
\begin{equation}\label{eq:imageoflambdai}
C^k_{m}( \gamma ) = (m - \sum_{i=1}^k \gamma_i) \Lambda_1 + \sum_{i=1}^{k-1} \gamma_i \Lambda_{i+1}~.
\end{equation}

We  can  characterize the weights in $m\Anull$  as the the set
	\begin{equation}\label{eq:Amkdef}
		\left\{  \sum_{i=1}^k  \lambda_i  \Lambda_i  :  \eta_i  \geq  0\hbox{  and  }
\sum_{i=1}^k  \lambda_i  \leq  m  \right\}.
	\end{equation}


Since $(m - \sum_{i=1}^k \lambda_i) \geq 0$, and the sum of the coefficients of $C^k_{m}(\gamma)$
is equal to $m - \gamma_k \leq m$, we see that $C^k_{m}( A_w )$ is also in $m\Anull$.  This transformation of $m\Anull$ is continuous since 
$$C^k_{m}( \gamma+\epsilon\tau) - C^k_{m}(\gamma) = -\epsilon \left(\sum_{i=1}^k \tau_i\right) \Lambda_1 
+ \sum_{i=1}^{k-1} \epsilon \tau_i \Lambda_{i+1}$$ hence adjacent alcoves are sent to adjacent alcoves.

It is reasonable to expect that we can realize the cyclic action using a Coxeter element $c$ and a translation.  
To this end, let $C_m^k(A_w) = c (A_w) +m\Lambda_k.$  In Lemma~\ref{lem:contains_cycles}, 
we computed that $c^{-p}\wt_i = \wt_0-\wt_p+\wt_{i+p}.$  Combining this with Equation~\eqref{eq:Amkdef}, 
it is easy to check that if $\eta \in m \Anull$, then $C_m^k(\eta) \in m \Anull$, 
and that this definition agrees with the one previously given.

We can check that for $1 \leq r \leq k$ that
$$\left(C^k_{m} \right)^r( \gamma ) = (m - \sum_{i=1}^k \gamma_i) \Lambda_r + \sum_{i=1}^{k} \gamma_i \Lambda_{i+r}$$
where the indices of $\Lambda_d$ are taken to be mod $k+1$ and $\Lambda_0 = 0$.  Moreover,
$$\left(C^k_{m} \right)^{k+1} (\gamma) = C^k_{m}\left( (m - \sum_{i=1}^k \gamma_i) \Lambda_k 
+ \sum_{i=1}^{k} \gamma_i \Lambda_{i+k} \right) = \gamma,$$
hence  $C^k_{m}$ has order $k+1$.

\begin{eg}
To give an example of this map we compute the orbit of a single alcove of $m \Anull$ when
$m=3$ and $k=3$.
Consider the $3$-bounded partition $\bndd = (3,1) \in Y_3^3$.  The corresponding 
alcove is indexed by the element ${\mathfrak r}((3,1)) = s_0 s_1 s_2 s_3$ is obtained from reading the tableau
\newdimen\squaresize \squaresize=12pt
\newdimen\thickness \thickness=0.4pt

\[\young{s_3\cr s_0& s_1& s_2\cr}.\]

The
bounding vectors of $s_0 s_1 s_2 s_3 \Anull $ are the images
of $\wt_0, \wt_1, \wt_2, \wt_3$ under the action of the element $s_0 s_1 s_2 s_3$.
In this case,
$s_0 s_1 s_2 s_3( \wt_0 ) =  \wt_1 +\wt_3 - \wt_0,$ 
$s_0 s_1 s_2 s_3( \wt_1 ) = \wt_2+\wt_3-\wt_0,$ 
$s_0 s_1 s_2 s_3( \wt_2 ) = 2 \wt_3-\wt_0,$ 
$s_0 s_1 s_2 s_3( \wt_3 ) = \wt_1+2\wt_3 - 2 \wt_0.$ 

The images of these vectors after applying $C_{3}^3$ are
$$C^3_{3}( \wt_1 + \wt_3 - \wt_0) = \wt_1 + \wt_2 - \wt_0 
\hbox{ then }\mu_1 = (2,2,1,1,1)$$
$$C^3_{3}( \wt_2+\wt_3 - \wt_0) = \wt_1 + \wt_3 - \wt_0 
\hbox{ then } \mu_2 = (3,1,1,1)$$
$$C^3_{3}( 2\wt_3 - \wt_0) = \wt_1 
\hbox{ then }\mu_3 = (1,1,1)$$
$$C^3_{3}( 2\wt_3+\wt_1 - 2\wt_0) = \wt_2 
\hbox{ then } \mu_4 = (2,2)$$
Hence, since $(2,2,1,1) \cap (3,1,1,1) \cap (1,1,1) \cap (2,2) = (1,1)$ and 
$\mathfrak{r}((1,1)) = s_0 s_3$, then $C^3_{3}( A_{s_0 s_1 s_2 s_3} ) = A_{s_0 s_3}$.  

Now we compute the action $C^3_3$ a second time and determine
$$C^3_{3}( \wt_1 + \wt_2 - \wt_0 ) = \wt_1 + \wt_2 + \wt_3 - 2 \wt_0 \hbox{ then }\mu_1 = (3,2,2,1,1,1)$$
$$C^3_{3}( \wt_1 + \wt_3 - \wt_0 ) = \wt_1 + \wt_2 - \wt_0\hbox{ then }\mu_2 = (2,2,1,1,1)$$
$$C^3_{3}( \wt_1 ) = 2 \wt_1 + \wt_2 - 2 \wt_0\hbox{ then }\mu_3 = (2,2,1,1,1,1,1,1)$$
$$C^3_{3}( \wt_2 ) = 2 \wt_1 + \wt_3 - 2 \wt_0\hbox{ then }\mu_4 = (3,1,1,1,1,1,1)$$
Hence, since $(3,2,2,1,1,1) \cap (2,2,1,1,1) \cap (2,2,1,1,1,1,1,1) \cap (3,1,1,1,1,1,1) = (2,1,1,1,1)$ and
$\mathfrak{r}((2,1,1,1,1)) = s_0 s_1 s_3 s_2 s_1 s_0$ and hence $C^3_{3}( A_{s_0 s_3} ) = A_{s_0 s_1 s_3 s_2 s_1 s_0}$.

Since the map $C^3_{3}$ has order $4$, we compute again the image of the alcove
$$C^3_{3}( \wt_1 + \wt_2 + \wt_3 - 2\wt_0 ) = \wt_2 + \wt_3 - \wt_0\hbox{ then }\mu_1 = (3,2,2)$$
$$C^3_{3}( \wt_1 + \wt_2 - \wt_0) =  \wt_1 + \wt_2 + \wt_3 - 2 \wt_0\hbox{ then }\mu_2 = (3,2,2,1,1,1)$$
$$C^3_{3}( 2 \wt_1 + \wt_2 - 2 \wt_0) = 2 \wt_2 + \wt_3 - 2 \wt_0\hbox{ then }\mu_3 = (3,2,2,2,2)$$
$$C^3_{3}( 2 \wt_1 + \wt_3 - 2 \wt_0) = 2 \wt_2 - \wt_0\hbox{ then }\mu_4 = (2,2,2,2)$$
Hence, since $(3,2,2) \cap (3,2,2,1,1,1) \cap (3,2,2,2,2) \cap (2,2,2,2) = (2,2,2)$ and
$\mathfrak{r}((2,2,2)) = s_0 s_1 s_3 s_0 s_2 s_3$ and hence $C^3_{3}( A_{s_0 s_1 s_3 s_2 s_1 s_0} ) 
= A_{s_0 s_1 s_3 s_0 s_2 s_3}$.

Therefore we have that
\begin{equation*}
A_{s_0 s_1 s_2 s_3} ~~~~\substack{C^3_3\\\longrightarrow\\{}}~~~~ A_{s_0 s_3} 
~~~~\substack{C^3_3\\\longrightarrow\\{}}~~~~ A_{s_0 s_1 s_3 s_2 s_1 s_0}
~~~~\substack{C^3_3\\\longrightarrow\\{}}~~~~ A_{s_0 s_1 s_3 s_0 s_2 s_3}
~~~~\substack{C^3_3\\\curvearrowright\\{}}
\end{equation*}
The alcoves correspond to the $3$-bounded partitions
\newdimen\squaresize \squaresize=8pt
\newdimen\thickness \thickness=0.4pt

\begin{equation*}
\young{\cr&&\cr}
~~~~\substack{C^3_3\\\longrightarrow\\{}}~~~~ \young{\cr\cr}
~~~~\substack{C^3_3\\\longrightarrow\\{}}~~~~ \young{\cr\cr\cr\cr&\cr}
~~~~\substack{C^3_3\\\longrightarrow\\{}}~~~~ \young{&\cr&\cr&\cr}
~~~~\substack{C^3_3\\\curvearrowright\\{}}
\end{equation*}
or their images under $\mathfrak{c}$ to the $4$-cores (with cells that have hook greater than $4$ shaded)
are given by the diagrams
\begin{equation*}
\young{\cr\gsqr&&&\cr}
~~~~\substack{C^3_3\\\longrightarrow\\{}}~~~~ \young{\cr\cr}
~~~~\substack{C^3_3\\\longrightarrow\\{}}~~~~ \young{\cr\cr\cr\gsqr&\cr\gsqr&&\cr}
~~~~\substack{C^3_3\\\longrightarrow\\{}}~~~~ \young{&\cr&\cr\gsqr&\gsqr&&\cr}
~~~~\substack{C^3_3\\\curvearrowright\\{}}~.
\end{equation*}

This calculation is consistent with the diagram in Figure \ref{eg:Y33} where we can see the orbit
of the $4$-core $(4,1)$ consists of precisely these four elements.
\end{eg}

Finding an action directly on $k$-bounded
partitions or on $(k+1)$-cores seems possible, but appears to be
more complicated than beautiful.  Using the characterization of Theorem~\ref{thm:mainthm}, 
an equivariant map between the geometric picture presented here 
and a certain set of words (coming from the abacus model of the $(k+1)$-cores) under 
rotation was given in~\cite{TW}. 
 




\subsubsection{Translate and apply $\phi$.}
\label{subsec:translate}

In the case that $m$ is coprime to $k+1$ it is possible to translate the region $Y^k$ so that
the identity alcove is at the center of the region and then rotation is an application of the element $\phi$.
We do this by modifying a construction of E.~Sommers to describe the dilated chamber.
Such dilations have also occurred in the work of J.~Y.~Shi.

Note that if $m$ and $k+1$ are not relatively prime then there will not be an alcove in the
region $Y_m^k$ which is fixed by rotation.

\begin{defn}
Let $m$ be coprime to $k+1$, and write $m = a (k+1) + b$ so that $b \equiv m \mod (k+1)$.
Let $\Phi_b:=\{\alpha_i+\alpha_{i+1}+\cdots+\alpha_{i+b-1} : 1 \leq i \leq k-b+1\}$ be the set of roots of height $b$.
Define $D_m^{k}$ to be the region bounded by the hyperplanes 
\[H_m:= \{ H_{\alpha,-a} : \alpha \in \Phi_{b}\} \cup \{ H_{\alpha,a+1} : \alpha \in \Phi_{k+1-b}\}.\]
\end{defn}

The following proposition is related to Lemma 2.2 in \cite{Fan} and Theorem 5.7 in \cite{Som}.

\begin{prop}
	The alcoves contained in $D_m^{k}$ are in bijection with the alcoves contained in $m\Anull.$
\end{prop}

\begin{proof}
We show that the reflections in the hyperplanes in $H_m$ generate a copy of $\widetilde{\A}_k$.  
This implies that $D_m^{k}$ is a fundamental region for this copy of $\widetilde{\A}_k$, so that $D_m^{k}$ 
is \emph{similar} to a dilation of $\Anull$.  To identify the size of the dilation, 
we choose a hyperplane defining a facet and compute the hyperplane that passes through 
the vertex of the region opposite to it.

Using one-line notation, we therefore have generators 
\[((i,i+b+a(k+1)))=((i-a(k+1),i+b)) \text{ for } 1\leq i \leq k-b+1,\] 
corresponding to the reflections in the hyperplanes $H_{\alpha_i+ \alpha_{i+1} + \cdots \alpha_{i+b-1}, -a}$ and
\[((i,i+k+1-b-(a+1)(k+1)))=((i,i-b-a(k+1))) \text{ for } 1 \leq i \leq b,\] 
corresponding to the reflections in the hyperplanes $H_{\alpha_i+ \alpha_{i+1} + \cdots \alpha_{i+k-b}, a+1}$.
We may rewrite these in a unified manner as the reflections 
\[T_m := \{((j,j+b+a(k+1))) : 1 \leq j \leq k+1\},\] 
so that the first $k+1-b$ of these reflections correspond to the first set of reflections and the last $b$ correspond to 
second set.  We shall trace through the affine Dynkin diagram generated by these reflections.

Let us suppose that $b< k/2$; the argument for $b>k/2$ is analogous.  Define a bijection 
$f:T_m \to \mathbb{Z}_{k+1}$ by \[f((j,j+b+a(k+1))):=j.\]  Since $b$ and $k+1$ are relatively prime, 
the sequence $\{f^{-1}(sb)\}_{s=0}^{k+1}$ contains each reflection exactly once.  Moreover, 
one can check that 
\[f^{-1}(sb) = ((sb, (s+1)b +(k+1)a))\text{ and } f^{-1}((s+1)b) = (((s+1)b, (s+2)b + (k+1)a))\] 
satisfy $\left(f^{-1}(sb) f^{-1}((s+1)b) \right)^3=1$,
and that non-adjacent reflections commute.  We
conclude that reflections in the hyperplanes of $H_m$ generate a copy of $\widetilde{\A}_k$.

We now find the exact size of the dilation by computing $m'$ so that the hyperplane 
$H_{\alpha_1+\cdots+\alpha_{b},m'}$ contains the weight obtained by intersecting all hyperplanes except 
$H_{\alpha_1+\cdots+\alpha_{b},-a}$ (the calculation below may be extended to the omission of any 
hyperplane, but we will not use this here).  If this weight has the form 
$\eta=\sum_{i=1}^k \eta_k \Lambda_k$, we therefore need to compute the sum of the first 
$b$ coordinates $m'=\sum_{i=1}^b \eta_i$.

We have the following equations specified by the hyperplanes:

\begin{align*}
E_1 =\eta_1+\eta_2+\cdots+\eta_{k-b+1} &= a+1 \\
E_2 =\eta_2+\eta_3+\cdots+\eta_{k-b+2} &= a+1 \\
\cdots \\
E_b =\eta_{b}+\eta_{b+1}+\cdots+\eta_{k} &= a+1 \\
E_{b+1} =\eta_2+\eta_3+\cdots+\eta_{b+1} &= -a \\
E_{b+2} =\eta_3+\eta_4+\cdots+\eta_{b+2} &= -a \\
\cdots \\
E_{k} =\eta_{k+1-b}+\eta_{k+2-b}+\cdots+\eta_{k} &= -a.
\end{align*}

Then we compute that $\sum_{i=1}^b E_i - \sum_{i=b+1}^k E_i = \sum_{i=1}^b \eta_i = (a+1)b+a(k-b)=b+ka=m'$.  
In particular, the hyperplane that passes through $\eta$ is $H_{\alpha_1+\cdots+\alpha_{b},b+ka}$, 
so that the distance between it and the facet opposite to it is $m=(k+1)a+b$, as desired.  Therefore, 
$D_m^{k}$ is similar to $m\Anull$.
\end{proof}


\begin{lemma}
The reflection $((j,j+b+a(k+1)))$ has reduced word \[\phi^{j-1}((s_1 \cdots s_k s_0)^a (s_1 s_2 \cdots s_b \cdots s_2 s_1) (s_1 \cdots s_k s_0)^{-a})\] for $1 \leq j \leq k+1$.
\label{lem:rwforhyperplanes}
\end{lemma}

\begin{proof}
For simplicity, let us consider the case $a=0$.  It is clear by the action of $s_i$ on one-line notation 
that the reflection $((1,1+b))$ has reduced word $s_1 s_2 \cdots s_b \cdots s_2 s_1$.  The map 
$\phi$ shifts the simple reflections by one, so that 
$\phi^{j-1}(s_1 s_2 \cdots s_b \cdots s_2 s_1)$ acts as the reflection $((j,j+b))$.  
The result for $a>0$ follows identically.
\end{proof}

\begin{cor}
	$D_m^{k}$ is invariant under $\phi$ and therefore defines a cyclic action on the alcoves in $D_m^k$.
\end{cor}

\begin{proof}
We note that $\phi$ is a continuous map since for vectors $\eta = \sum_{i=1}^k \eta_i \wt_i$ and
$\tau = \sum_{i=1}^k \tau_i \wt_i$ and for real values of $\epsilon$, we have
\[ \phi( \eta + \epsilon \tau ) - \phi( \eta ) 
= \epsilon \sum_{i=1}^k \tau_i \wt_{i+1} - \epsilon \sum_{i=1}^k \tau_i \Lambda_1~.
\]
Notice also that $\phi$ preserves the set of reflections $T_m$ and hence also preserves the
set of hyperplanes $H_m$.  We conclude that since $\phi$ is continuous and preserves the
bounding hyperplanes of $D_m^k$ that it maps $D_m^k$ to itself.

\end{proof}

\begin{figure}[ht]
\begin{center}
\includegraphics[scale=.9]{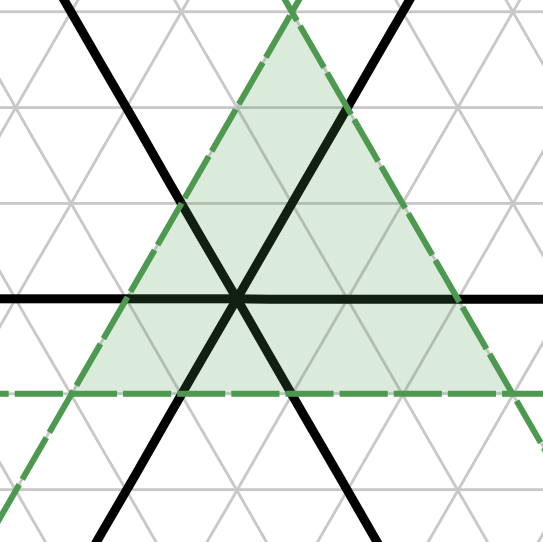}
\end{center}
\caption{The shaded region is $D_4^2$ which is bounded by the hyperplanes $\{ H_{\alpha_1,-1}, H_{\alpha_2,-1},
H_{\alpha_1+\alpha_2,2}\}$.  The alcoves furtherest away from $\Anull$
are indexed by the affine symmetric group
elements $s_0 s_2 s_0 s_1$, $s_1 s_0 s_1 s_2$ and $s_2 s_1 s_2 s_0$. }
\label{eg:D24}
\end{figure}

\begin{eg}  The region $D_3^3$ looks exactly like the shape in Figure \ref{fig:3dkeq} and is cut out
by the bounding hyperplanes 
$\{ H_{\alpha_1+\alpha_2+\alpha_3,0}, H_{\alpha_1,1}, H_{\alpha_2,1}, H_{\alpha_3,1} \}$.
The four longest elements of $D_3^3$ in $\widetilde{\A}_3$ are 
$s_1 s_0 s_2 s_3$, $s_2 s_1 s_3 s_0$, $s_3 s_2 s_0 s_1$, and $s_0 s_3 s_1 s_2$.  
They form a single orbit of size four under $\phi$.  By contrast, the elements 
$s_0s_2$ and $s_1s_3$ form an orbit of size two under $\phi$, and the identity is in an orbit by itself.
\end{eg}

\subsection{Enumeration of orbits when $m=2$}

\begin{defn}[Reiner, Stanton, White~\cite{reiner2004cyclic}]
Let $X$ be a finite set, $C$ a cyclic group of order $k+1$ acting on $X$, and 
$X(q)$ a generating function for $X$.  Then the triple $(X, C, X(q))$ 
\deft{exhibits the cyclic sieving phenomenon (CSP)} if for $c \in C,$ 
\[X(\omega(c)) = |\{ x \in X : c(x) = x \}|,\] 
where $\omega: C \to \mathbb{C}$ is an isomorphism of $C$ with the $(k+1)$st roots of unity.
\end{defn}

In other words, the triple $(X, C, X(q))$ exhibits the CSP if the orbit structure of $X$ 
under a cyclic action may be computed by evaluating the polynomial $X(q)$ at a root of unity.

We recall here the description of R.~Suter's cyclic action on $Y^k$: given a partition 
$\lambda = (\lambda_1, \lambda_2, \ldots, \lambda_n)$ such that $\lambda_1 + n-1 \leq k$, 
$\Ct_2^k$ acts on the Ferrers diagram $\lambda$ by removing its bottom row $\lambda_1$ and 
prepending a column of height $k-\lambda_1$ on the left.

V.~Reiner conjectured that $(Y^k, \langle \Ct_2^k \rangle, \prod_{i=1}^{n-1} (1+q^i))$ 
exhibits the cyclic sieving phenomenon (see~\cite{TW} for a full account).  
D.~Stanton stated the more refined conjecture that there was an equivariant bijection between 
$Y^k$ under R.~Suter's cyclic action and binary words of length $n$ with odd sum.  
The following proof of this conjecture originally appeared in~\cite{nathanmasters}, 
where it was called a \deft{bijaction} (a bijection induced by an action).  
A generalization of this result for any $m$ is given in~\cite{TW}.

\begin{thm}
There is an equivariant bijection between $Y^k$ under $\Ct_2^k$ and binary words of length $k+1$ with odd sum under rotation. 
\label{thm:equivy}
\end{thm}



\begin{proof}
Let $X^k$ be the set of binary words of length $k$.  To define the bijection from $Y^k$ to 
binary words of odd sum, we first recall a bijection between the set of partitions $Y^k$ and $X^k$.  
Beginning at the bottom right of the Ferrers diagram of a partition $\lambda$, trace along the 
boundary of $\lambda$ to the top left of the diagram, transcribing steps left as $1$ and steps up as $0$.  
Omit the first step, which must be up, and finally pad the transcribed word on the right with $0$s 
to get a word $x(\lambda)$ of length $k$.  We call $x(\lambda)$ the \deft{boundary word} for the 
partition $\lambda$.  For example, when $k=6$, $x(221) = 010100$.

We next define the cyclic action $\Cb_2^k$ directly on binary words of length $k$.  Map a binary word 
$x=\underset{n}{\underbrace{1\ldots1}}0y$, where $y$ represents the last $k-n-1$ letters of $x$, 
to the word $y1 \underset{n}{\underbrace{0\ldots0}}$, where $n$ may be zero.  Then this is exactly 
the action of $\Ct_2^k$, since removing the top row is equivalent to removing an initial string of 
$11 \ldots 10$, and adding in a new first column corresponds to appending the string $100 \ldots 0$.

For each binary boundary word, we now perform this action $k+1$ times, appending strings that correspond 
to the first columns without removing the initial string that corresponds to the top row.  We may restrict 
the length of the resulting word to be $2(k+1)$, since the action is of order $k+1$.  It is clear that 
these extended binary words are of the form $\bar{x} = x 1 (1-x)0$, where $x$ is our original binary 
boundary word and addition is done modulo $2$; we write \[W^k = \{x 1 (1-x)0 : x \in X^k\}.\]  
Then $\Cb_2^k$ acts on these extended binary words by rotating a word left until just after the leftmost $0$.  

We have therefore established equivariant bijections between $Y^k$ under $\Ct_2^k$, $X^k$ under $\Cb_2^k$, and 
$W^k$ under $\Ch_2^k$.  We will now give an equivariant bijection between $X^k$ under the map $\Cb_2^k$
and binary words of  length $k+1$ with odd sum under rotation.


Let $s(x)$ be the first letter of the word $x$.  We map $x \in X^k$ to the word $w(x)$ defined by by 
\[w(x):=s(x) s((\Cb_2^k)^{1}(x)) s((\Cb_2^k)^{2}(x)) \cdots s((\Cb_2^k)^{k} (x)).\]

The claim is that this is the desired bijection.  By construction, the map $x \mapsto w(x)$ is equivariant
where the action of $\Ch_2^k$ acts on elements of $W^k$ by taking the first letter of $w(x)$ and moving it
to the end.  

Furthermore, $w(x)$ has sum $1 \mod 2$: by the definition of $\Ch_2^k$ on $W^k$, $w(x)$ is the word 
containing those numbers that occur after a $0$ in $\bar{x}$.  If 
\[\bar{x}=\bar{x}_1\bar{x}_2 \cdots \bar{x}_{2(k+1)} = x_1 x_2 \cdots x_k 1 (1-x_1)(1-x_2)\cdots (1-x_{k}) 0,\] then 
$x_{i+1}$ occurs in the sum of the letters of $w(x)$ iff $x_i=0$, while $(1-x_{i+1})$ occurs in the sum 
iff $x_i = 1$.  Therefore each letter $x_{i+1}$ will be counted as $x_{i+1}-x_i$ and so we can express 
the sum of the letters as $x_1 + \sum_{i=1}^{k-1} (x_{i+1} - x_{i}) + (1-x_k) = 1 \mod 2$.  

Therefore, the image of $W^k$ under the map $w$ is indeed a subset of words of length $k+1$ that sum to 
$1 \mod 2$.  It remains to show that $w$ is a bijection, which we shall do by constructing its inverse.  
If $w=w_1 w_2 \cdots w_{k+1}$ is a word that sums to $1 \mod 2$ with $2n+1$ ones, we split $w$ into two pieces, 
$w^{(0)}=w_1 w_2 \cdots w_\ell$ and $w^{(1)}=w_{\ell+1} w_{\ell+2} \cdots w_{k+1}$, where $\ell$ is the 
position of the $(n+1)$st $1$ in $w$ (from the left).  We can now recover $x=x_1 x_2 \cdots x_k$ as follows.  
Let $\sigma:=0$.  For $i=1,2,\ldots,k+1$, let $t$ be the leftmost unread letter in $w^{(\sigma)}$, update 
$\sigma=\sigma+t$, and record $x_i:=\sigma$.  By definition of $\ell$, this procedure terminates by visiting 
all the letters of $w$, and it is not hard to check that this procedure recovers the word $x1$.
\end{proof}
\begin{eg}
For example, an orbit in $Y^4$ is \[(2) \to (1,1) \to (2,1,1) \to (2,2) \to (3,1),\] with corresponding orbit in 
$X^4$ \[1100 \to 0100 \to 1001 \to 0110 \to 1101.\]  Taking the first letters of this orbit, starting with $1100$, 
gives $w(1100)=10101$ (a word of length $k+1=5$ that sums to $1 \mod 2$). Its orbit under $\Ch_2^4$ is
\[ 10101 \to 01011 \to 10110 \to 01101 \to 11010 \]
  To reverse the bijection, the word 
$w=10101$ is split into $w^{(0)}=101$ and $w^{(1)}=01$.  Then

\begin{itemize}
\item $(i=1, t=1, \sigma=1, \bar{1}01|01): x_1=1,$
\item $(i=2, t=0, \sigma=1,\bar{1}01|\bar{0}1): x_1x_2=11,$
\item $(i=3, t=1, \sigma=0, \bar{1}01|\bar{0}\bar{1}): x_1x_2x_3=110,$
\item $(i=4, t=0, \sigma=0, \bar{1}\bar{0}1|\bar{0}\bar{1}): x_1x_2x_3x_4=1100,$
\item $(i=5, t=1, \sigma=1, \bar{1}\bar{0}\bar{1}|\bar{0}\bar{1}): x_1x_2x_3x_4x_5=11001.$
\end{itemize}
\end{eg}

\begin{cor}
$(2\Anull, \langle C_2^k \rangle, \prod_{i=1}^{n-1} (1+q^i))$ exhibits the CSP.
\end{cor}

\begin{proof}
Theorem~\ref{thm:equivy} gives an equivariant bijection between between $Y^k$ under $\Ct_2^k$ and 
binary words of length $n$ with odd sum under rotation.  Recall that $[m]_q:=\frac{1-q^{m}}{1-q},$ $[m]_q!:=\prod_{i=1}^m [i]_q,$ and $\binom{m}{k}:=\frac{[m]_q!}{[m-k]_q![k]_q!}.$  By Theorem 1.1 of \cite{reiner2004cyclic}, we know that
binary words of length $n$ with odd sum under rotation exhibit the cyclic sieving property with
generating function
$$S(q) = \sum_{k~odd} {{n}\brack{k}}_q.$$

It remains to show that $S(q) \equiv \prod_{i=1}^{n-1} (1+q^i) \mod q^n-1$.
This fact follows from an argument provided to us by D. Stanton \cite{stanton}.
Let $\zeta$ be a $d^{th}$ root of unity where $d$ divides $n$.  Then 
$${m \brack k}_{q=\zeta} = 
\begin{cases}{\lfloor m/d\rfloor \choose k/d}&\hbox{ if $d|k$}\\
0 & \hbox{ otherwise}
\end{cases}.
$$

By the $q$-binomial theorem,
\begin{align}
\prod_{i=1}^{n-1} (1+q^i) &= \sum_{j=0}^{n-1} q^{\frac{j(j+1)}{2}} {n-1 \brack j}_q.
\end{align}

When $q=\zeta$, we have
\begin{align}
\prod_{i=1}^{n-1} (1+\zeta^i) &= \sum_{j=0}^{n-1} \zeta^{\frac{j(j+1)}{2}} {n-1 \brack j}_{q=\zeta}
= \sum_{j: d|j} (-1)^{\frac{j}{d}(j+1)}{n/d-1 \choose j/d }
= \begin{cases}
0&\hbox{ if $d$ is even}\\
2^{n/d-1}&\hbox{ if $d$ is odd}\\
\end{cases},
\end{align}
while we also have that
\begin{equation}
\sum_{k=0}^{n/2} {n \brack 2k+1}_q
= \sum_{k: d|(2k+1)} {n/d \choose (2k+1)/d}
= \begin{cases}
0&\hbox{ if $d$ is even}\\
2^{n/d-1}&\hbox{ if $d$ is odd}\\
\end{cases}~.
\end{equation}
\end{proof}

\section{Acknowledgements}
The authors would like to thank Hugh Thomas for numerous conversations and input. 
The authors are also grateful to Drew Armstrong for pointing out the paper and 
corresponding results  of Sommers \cite{Som}. The authors would also like to 
thank the reviewers for their helpful comments.

The authors are also grateful to the anonymous reviewer that offered
suggestions to improve an earlier version of this paper.

\section{Appendix : Example for $m=4$ and $k=6$}

We provide one large example of the graph structure of
the poset of partitions in $Y_m^k$ when $m=4$ and $k=6$
in Figure \ref{fig:largeexamp}.  By Proposition~\ref{prop:volume} we know that this
graph has $4^6 = 4096$ vertices and the program generated $10752 = 7 \cdot 3 \cdot 2^9$ edges. 
  
An edge occurs between two partitions
if their images under the map
$\mathfrak{r} : \mathcal{P}^{(k)} \rightarrow \widetilde{\A}_k$
are comparable in weak order.  There is also a combinatorial method for checking
this condition.  There is an edge between two partitions $\bndd$ and $\la$
if they differ by a single cell and $\bndd^{\omega_k}$ and $\la^{\omega_k}$
also differ by a single cell (recall that the operation $\bndd^{\omega_k}$ was defined
in Section \ref{sec:kbounded}).

The picture in Figure \ref{fig:largeexamp} was created using programs in Sage
for manipulating partitions and cores \cite{sage, sage-co} to generate a file
that describes the graph structure.  The graph was then displayed using the program
{\tt neato} in the Graphviz software package \cite{graphviz} to generate the image.

\begin{figure}[ht] \label{fig:largeexamp}
\begin{center}
\includegraphics[width=6in]{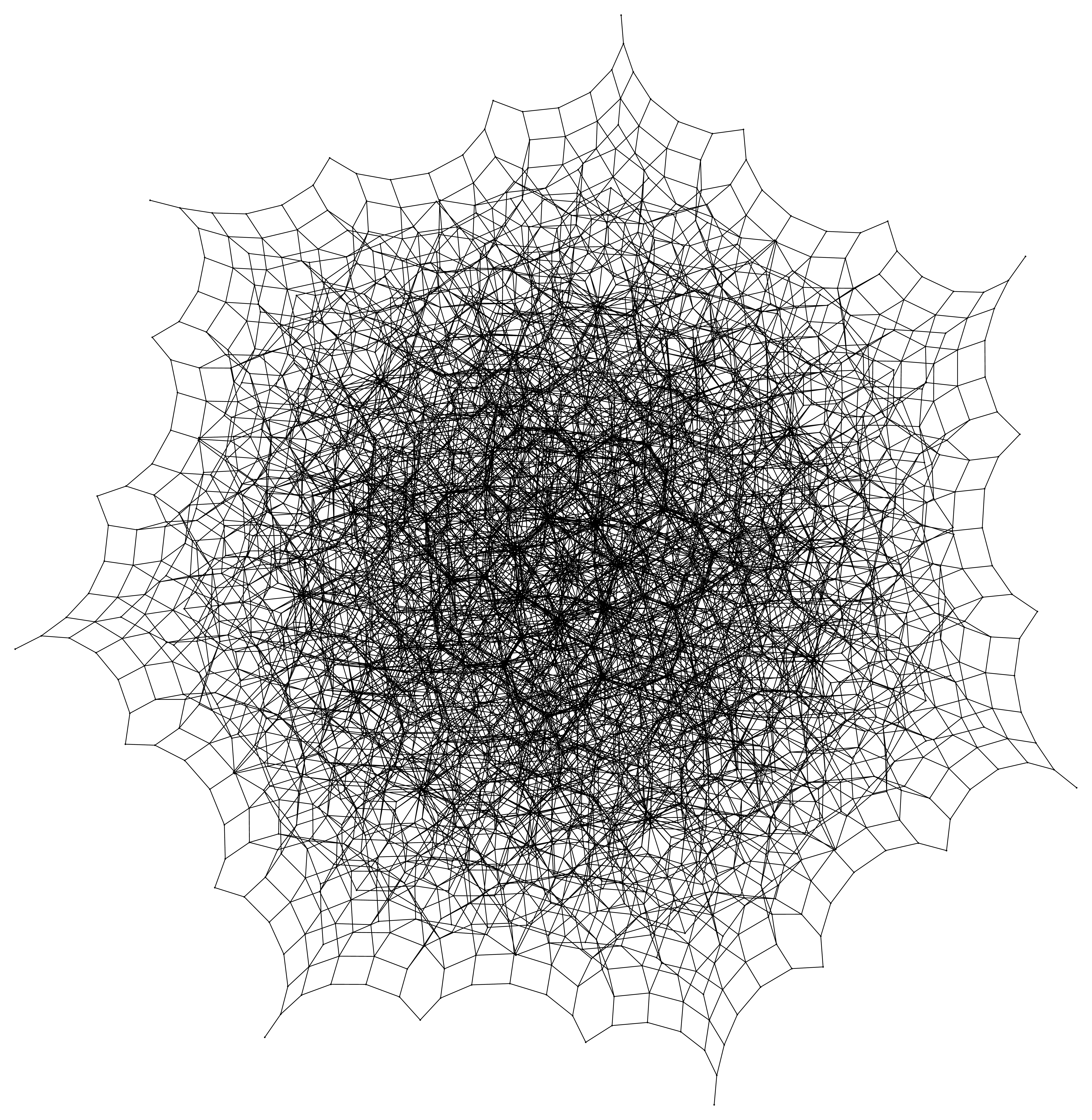}
\caption{Poset of $Y_4^6$ consisting of partitions which are contained in unions of at most
four rectangles with hook-length $6$.  This picture was created using the programs Sage \cite{sage, sage-co}
and Graphviz \cite{graphviz}.}
\end{center}
\end{figure}

\end{document}